\title{\textsc{Verified computations for hyperbolic 3-manifolds}}
\author[N.Hoffman]{Neil Hoffman}
\address{Department of Mathematics and Statistics, University of Melbourne,
Victoria 3010, Australia}
\email{nhoffman@ms.unimelb.edu.au}
\author[K.Ichihara]{Kazuhiro Ichihara}
\address{Department of Mathematics, 
College of Humanities and Sciences, Nihon University,
3-25-40 Sakurajosui, Setagaya-ku, Tokyo 156-8550, Japan}
\email{ichihara@math.chs.nihon-u.ac.jp}
\author[M.Kashiwagi]{Masahide Kashiwagi}
\address{Department of Applied Mathematics, Waseda University,
3-4-1 Okubo, Shinjuku, Tokyo 169-8555 Japan}
\email{kashi@waseda.jp}
\author[H.Masai]{Hidetoshi Masai}
\address{Department of Mathematical and Computing Sciences, Tokyo Institute of
Technology, O-okayama, Meguro-ku, Tokyo 152-8552 Japan}
\email{masai9@is.titech.ac.jp}
\urladdr{}
\author[S.Oishi]{Shin'ichi Oishi}
\address{Department of Applied Mathematics, Waseda University,
3-4-1 Okubo, Shinjuku, Tokyo 169-8555 Japan}
\email{oishi@waseda.jp}
\urladdr{}
\author[A.Takayasu]{Akitoshi Takayasu}
\address{Department of Applied Mathematics, Waseda University,
3-4-1 Okubo, Shinjuku, Tokyo 169-8555 Japan}
\email{takitoshi@aoni.waseda.jp}
\keywords{hyperbolic 3-manifold, verified computation, interval arithmetic, Krawczyk's Test}
\subjclass[2000]{Primary 57M50; Secondary 65G40}
\newtheorem{thm}{Theorem}[section]    
\theoremstyle{definition}
\newtheorem*{rem}{Remark}             
\begin{document}
\begin{abstract}
For a given cusped 3-manifold $M$ admitting an ideal triangulation, we describe 
a method to rigorously prove that either $M$ or a filling of $M$
admits a complete hyperbolic structure via verified computer calculations.
Central to our method are an implementation of interval arithmetic and Krawczyk's Test. These techniques 
represent an improvement over
existing algorithms as they are faster while accounting for error accumulation in a more direct and user friendly way.
\end{abstract}

\dedicatory{Dedicated to Professor Sadayoshi Kojima on the occasion of his 60th birthday.}

\maketitle
\tableofcontents
\section{Introduction}

The study of 3-dimensional manifolds, often abbreviated by \textit{3-manifolds}, 
starts with the seminal papers by H. Poincar\'{e} in 1984--1904. 
In the last of these six papers, 
he raised a question, which became the famous \textit{Poincar\'{e} Conjecture}. 
This has been one of the driving forces for (low-dimensional) topology, 
and a great amount of effort was spent in pursuit of a solution.  
After nearly a century later, in 2002--2003, 
G. Perelman finally reached to the end of the struggles 
by providing an affirmative answer to the \textit{Geometrization Conjecture}, 
an extended version to the Poincar\'{e} Conjecture 
in \cite{Perelman1, Perelman2, Perelman3}. 
See \cite{Milnor2006, Morgan2005} as detailed references for example. 

The Geometrization Conjecture was raised by W. Thurston, 
who brought about dramatic changes to the study of 3-manifolds. 
He introduced a geometric view in the sense of Klein to the 3-manifold theory, 
and, 
in particular, he incorporated an amazing application of non-Euclidean geometry, namely \textit{hyperbolic geometry}
into the study of 3-manifolds.

As is well-known, every closed 2-dimensional manifold, i.e., closed surface, 
admits a geometric structure, a complete Riemannian metric of constant sectional curvature. 
Surprisingly Thurston predicted a similar situtation for the 3-dimensional case \cite{Thurston1982}. 
That is, every compact orientable 3-manifold can be 
canonically decomposed by cutting along essential 2-spheres and tori 
into the pieces, each of which admits a locally homogeneous geometric structure. 
This is the Thurston's Geometrization Conjecture. 
Actually he showed that there are exactly EIGHT geometries (i.e., models of geometric structures) 
to be considered, 
and essentially gave a proof for the case of 3-manifolds containing decomposing tori. 

Among the eight geometries, six have been well studied and are generally understood, 
the Seifert fibered geometries. 
The seventh is \textit{sol}-geometry. All manifolds admitting a sol geometric structure are torus bundles over the circle, 
or $n$-fold quotients of torus bundles over the circle where $n=2,4$. 
By most accounts, the most common and yet most interesting geometry occurs if a manifold $M$ admits a hyperbolic structure, 
i.e. $M$ admits a complete Riemannian metric of constant sectional curvature $-1$ of finite volume. 
See \cite{Milnor1982} for a good survey. 

In this paper, we describe a computer-aided practical method 
to rigorously prove that a given 3-manifold 
admits a complete hyperbolic structure via verified calculations. 
This procedure is already implemented as a software and available at \cite{Waseda}.

In the following, we give a sketch of the contents with the organization of the paper. 
In the discussions that follow, we will consider only 3-manifolds that are orientable, compact with boundary consisting of the (possibly empty) disjoint union of tori.


Our program takes in as an initial input
the combinatorial data of an ideal triangulation 
for the compact bounded case 
and a surgery description for the closed or partially filled case. 
From that data, as Thurston first described, we can find some algebraic equations, 
called a system of \textit{Gluing equations} with complex variables. 
If this system of equations has a system of complex solutions with positive imaginary parts, 
then the given manifold admits a hyperbolic structure. 
In the next section, we will recall a concise explanation for that, 
mainly based on the well-known book \cite{BenedettiPetronio1992}. 
Note that this process has already been implemented as \textit{SnapPea} by J. Weeks. 
Using the kernel code by Weeks, M. Culler and N. Dunfield implemented \textit{SnapPy} which is roughly speaking, a SnapPea on python.
(For further background on SnapPy or the SnapPea kernel see the SnapPy documentation and \cite{SnapPy}).
By abusing notation, in this paper, when we say SnapPea, we mean the Weeks's kernel code of SnapPea, {\em or} 
the Weeks's version of SnapPea. 
By SnapPy, we mean the actual program available at \cite{SnapPy}.
SnapPea uses the Newton's method to solve the equations, and hence, 
the solutions are just approximated ones. 
These approximated solutions, in principle, would not prove the convergence of the Newton's method.
Thus although SnapPea is very practical, it cannot give any rigorous mathematical proof 
for a given manifold to be hyperbolic. 

To prove that a given system of equations actually have a desired solution, 
we employ interval arithmetic and Krawczyk's test, both of which are explained in \S 3.
We will give a brief review of how to obtain mathematically rigorous conclusions from results of numerical computations.

In \S 4, the actual program {\em hikmot} and its implementation will be explained.
We further explain how to use it to rigorously prove the hyperbolicity of a given triangulated manifold.

The last two sections provide some of conclusions of our work, 
open problems, and expected future work. 
Also we will explain one application of our work 
to the study of exceptional surgeries on alternating knots,
which is a joint work of a part of the authors; Ichihara and Masai. This is followed by an appendix explaining the argument function.

\section*{Acknowledgements}

The authors would like to thank Mark Bell, Marc Culler, Nathan Dunfield, Craig Hodgson, Sadayoshi Kojima, and Bruno Martelli
 for a number of helpful conversations. 
 N.~H would like to thank Nihon University and the Tokyo Institute of Technology for 
graciously hosting him during a visit to Japan in the early stages of this project. 
Finally, for the remainder of this project,  N.~H has been supported by the Max Planck Institute for Mathematics and 
Australian Research Council Discovery Grant DP130103694.
K.~I was partially supported by
Grant-in-Aid for Young Scientists (B), No.~23740061, 
Ministry of Education, Culture, Sports, Science and Technology, Japan.
The work of H.~M was supported by JSPS Research Fellowship for Young Scientists.
S.~O and A.~T were partialy supported by the CREST, JST project tilted Establishment of Foundations of Verified Numerical Computations for Nonlinear Systems and Error-free Algorithms in Computational Engineering.

Finally, the authors wish to thank Tokyo Institute of Technology, the University of Texas Math Department, and Waseda University for providing computer support for this project.

\section{Gluing equations}
This section establishes our much notation for the rest of our paper. 
For the most part the notation is consistent the documentation for \cite{SnapPy} to ease the reading of this paper's companion computer code.  For further background on the hyperbolic geometry used in this paper, we refer the reader to  \cite{BenedettiPetronio1992} and \cite{ThurstonLectureNotes}. For some details on the various methods used by SnapPy, 
or 
the SnapPea Kernel to construct the necessary system of equations, we refer the reader to \cite{WeeksKnotTheory} and, of course, the documentation for SnapPy \cite{SnapPy}.

As noted in the introduction, we will assume our manifolds are orientable. We point out that our algorithm can verify the hyperbolicity of a non-orientable manifold $Q$ by establishing the hyperbolicity of the orientable double cover of $Q$. 

Using the upper half space as our model for hyperbolic 3-space, $\mathbb{H}^3$, we can identify the ideal boundary $\partial \mathbb{H}^3$ with $\mathbb{C} \cup \{\infty\}$. Also, we denote by $\overline{\mathbb{H}^3}$ the union $\mathbb{H}^3 \cup \partial \mathbb{H}^3$. The group of orientation preserving isometries acting on $\mathbb{H}^3$ is identified with $PSL(2,\mathbb{C})$ in the standard way. 

Refining our definition of the introduction, a 3-manifold $M$ is said to admit a hyperbolic structure
 if $M \cong \mathbb{H}^3/\Gamma$ where $\Gamma \subset PSL(2,\mathbb{C})$ is discrete 
 and the integral of the volume form for $\mathbb{H}^3$ over the fundamental domain
 for the quotient $\mathbb{H}^3/\Gamma$ is finite. For ease of notation, when $
 M$ admits a hyperbolic structure we will often just consider it as the
  quotient $\mathbb{H}^3/\Gamma$. 

An \emph{ideal tetrahedron} $T$ is a tetrahedron in $\overline{\mathbb{H}^3}$ with all 
four vertices  $\{z_1,z_2,z_3,z_4 \} \subset \partial \mathbb{H}^3$. 
There is an 
isometry $\gamma$ of $\mathbb{H}^3$ such that $\gamma(T)$ has vertices 
$\{0,1,\infty, z \}$, such that $\{\infty, 0,1, z \}$ and $\{z_1,z_2,z_3,z_4 \}$ 
have the same cross ratio. Under this definition, $z$ is only well defined up 
to the choice of which ordered subset of points we send to the ordered set 
$0,1,\infty$. However, $z$ is defined up to $z, \frac{z-1}{z}$ and $\frac{1}{1-z}$. 
 Furthermore, we can label each edge in our ideal tetrahedron by $z, 
\frac{z-1}{z}$ and $\frac{1}{1-z}$ such that the argument of the complex 
number is the dihedral angle along that edge (see Figure \ref
{subfig:IdealTetra01z}). By convention, 
 we will choose a complex argument function, $\arg(w)$, such that the range is  $(-\pi,\pi]$ for $w \in \mathbb{C}\setminus \{0\}$. 
 Also, we define $\log z = \log |z| + i \arg(z)$.
If $w$ is any complex parameter associated
 to the edge of an ideal tetrahedron with $\arg(w)<0$, we say 
the corresponding ideal tetrahedron is negatively oriented and if $\arg(w)>0$, we say the corresponding ideal tetrahedron is positively oriented. 
Ultimately, the algorithm described by this paper certifies when there is a solution to the Gluing equations such that all tetrahedral
parameters are positively oriented.

A \emph{truncated tetrahedron} is constructed by removing a neighborhood of the ideal
point (see Figure \ref{subfig:TruncatedTetra}). Technically, a cusped manifold (e.g a knot complement) 
is triangulated
by ideal tetrahedra and compact manifold with non-empty toroidal boundary (e.g a knot exterior) is triangulated by
truncated tetrahedra. Truncated tetrahedra have the property that opposite dihedral angles are equal as well
as the property that all angles on a triangular face sum to $\pi$ (the product of the complex parameters is $-1$). 

We now begin the process of describing the gluing equations
 associated to a triangulation $\mathcal{T}$ of a  hyperbolic 3-manifold $M$. Ultimately, 
there will be $n+2k+h$ equations, where $n$ is the 
number of tetrahedra in $\mathcal{T}$, $k$ in the number of unfilled 
cusps of $M$ and $h$ is the number of filled cusps of $M$. 
For convenience, we will use the parameter $m$ throughout this section to denote the $m$th equation 
in the set of $n+2k+h$ equations we construct in the following paragraphs. We will first assume that $h=0$ and then extend to the general case
in $\S$\ref{subsect:FilledCusps}. In $\S$\ref{subsect:FullSet}, we give a complete set of $n+2k+h$ equations before 
finally providing the set of $n$ equations that are solved by our computer algorithm.

\subsection{Edge equations} Each edge $e_m$ in a triangulation $\mathcal{T}$ of $M$, is an equivalence class of 
edges in the set of $n$ tetrahedra that make up $\mathcal{T}$. 
In this disjoint union of tetrahedra, we fix a labeling of the edges in the 
$j$th tetrahedron by the 
complex parameters: $z_j, \frac{z_j-1}{z_j}$ or $\frac{1}{1-z_j}$ such that
the opposite dihedral angles get the same label.    
By Euler characteristic conditions, there are $n$ equivalence 
classes of edges in $\mathcal{T}$, determined by the gluing 
identification. We can record by $a_{j,m}$
the number of times an edge labeled by
 $z_j$ is a member of the equivalence 
 class $e_m$. Similarly, we can define $b_{j,m}$
 as the number of times an edge labeled by
 $\frac{z_j-1}{z_j}$ is part of the
 equivalence class $e_m$ and finally, $c_{j,m}$
 as the number of times an edge labeled by 
 $\frac{1}{1-z_j}$ is part of the
 equivalence class $e_m$. If $\hat{\mathcal{T}}$ is 
a tessellation of $\mathbb{H}^3$ with $e_m$ an edge 
between $0$ and $\infty$, then
 the product of all the complex parameters around $e_m$
 is $1$ and the sum of their arguments is $2\pi$. In fact,
 for any edge $e_m$ there is an isometry of $\mathbb{H}^3$
 that sends the endpoints of $e_m$ to $0$ and $\infty$.
 Thus, around the $m$th edge, we get the equations:

\begin{align}
\prod_{j=1} ^n \left(z_j\right)^{a_{j,m}}\left(\frac{1}{1-z_j}\right)^{b_{j,m}}\left(\frac{z_j-1}{z_j}\right)^{c_{j,m}}=1, 
\end{align}
 and 
 \begin{align}
  \sum_{j=1} ^n {a_{j,m}} \cdot \arg\left(z_j\right)+ {b_{j,m}}\left(\frac{1}{1-z_j}\right)\cdot \arg+ {c_{j,m}}\cdot \arg\left(\frac{z_j-1}{z_j}\right) = 2\pi i.
\end{align} 
 
Note for a fixed $j$, $\sum_{\ell=1} ^n a_{j,\ell}=2$, $\sum_{\ell=1} ^n b_{j,\ell}=2$, 
and $\sum_{\ell=1} ^n c_{j,\ell}=2$, and since each of these coefficients is non-negative,
each $a_{j,m},b_{j,m}$, or $c_{j,m}$ is in $\{0,1,2 \}$. 

The notation above can also be presented in a more standard way using logarithms:

\begin{align}
\sum_{j=1} ^n a_{j,m} \log\left(z_j \right)  + b_{j,m} \log\left(\frac{1}{1-z_j}\right) +c_{j,m} \log\left(\frac{z_j-1}{z_j}\right) = 0+ 2\pi i .
\end{align}

\begin{figure}
        
        \begin{subfigure}
                \centering
                \resizebox{7 cm}{!}{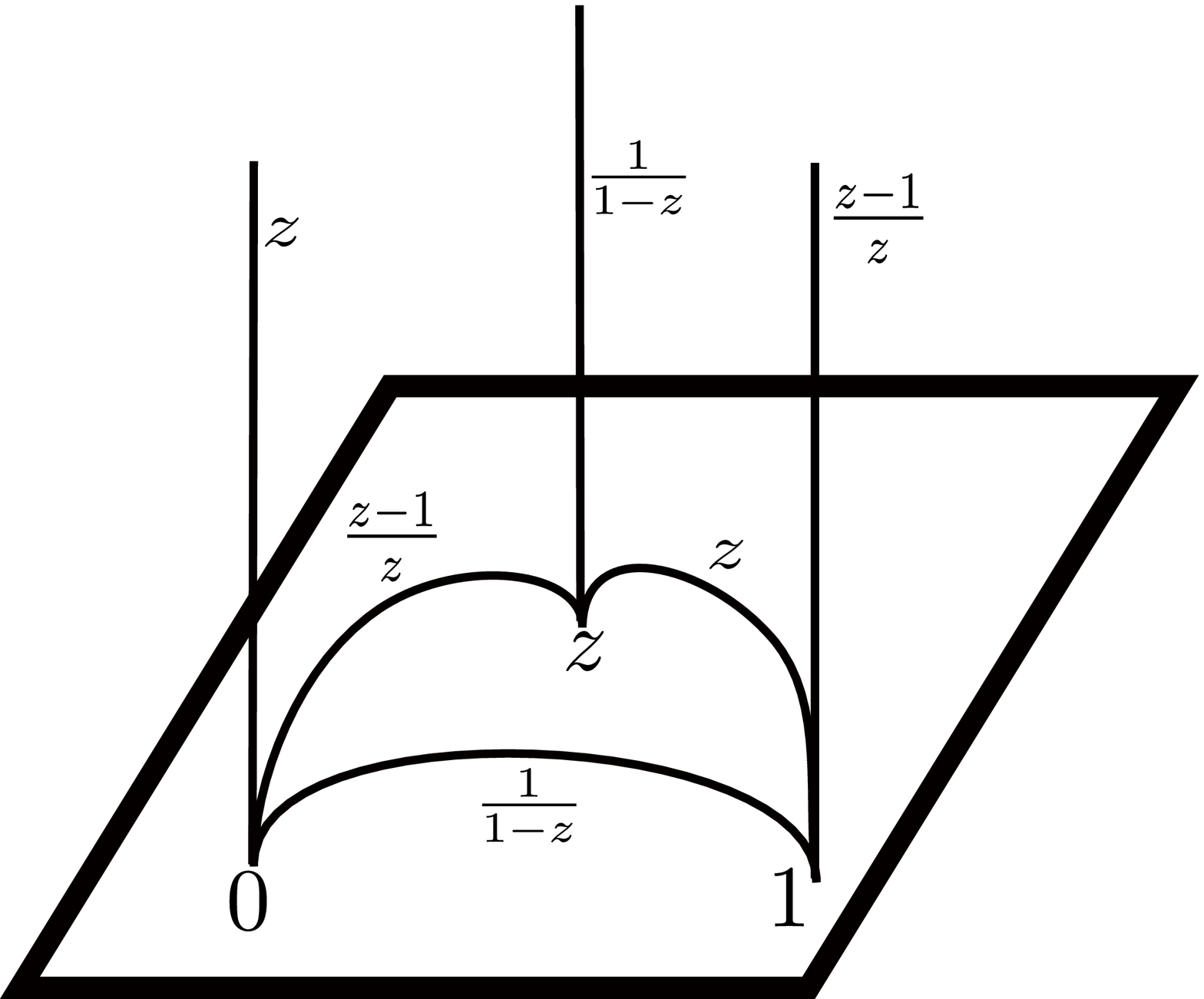}
                \caption{ \label{subfig:IdealTetra01z} An ideal tetrahedron with vertices at $0$,$1$, $\infty$, and $z$. 
  }
        \end{subfigure}%
        
        \begin{subfigure}
                \centering
                \resizebox{7 cm}{!}{\input{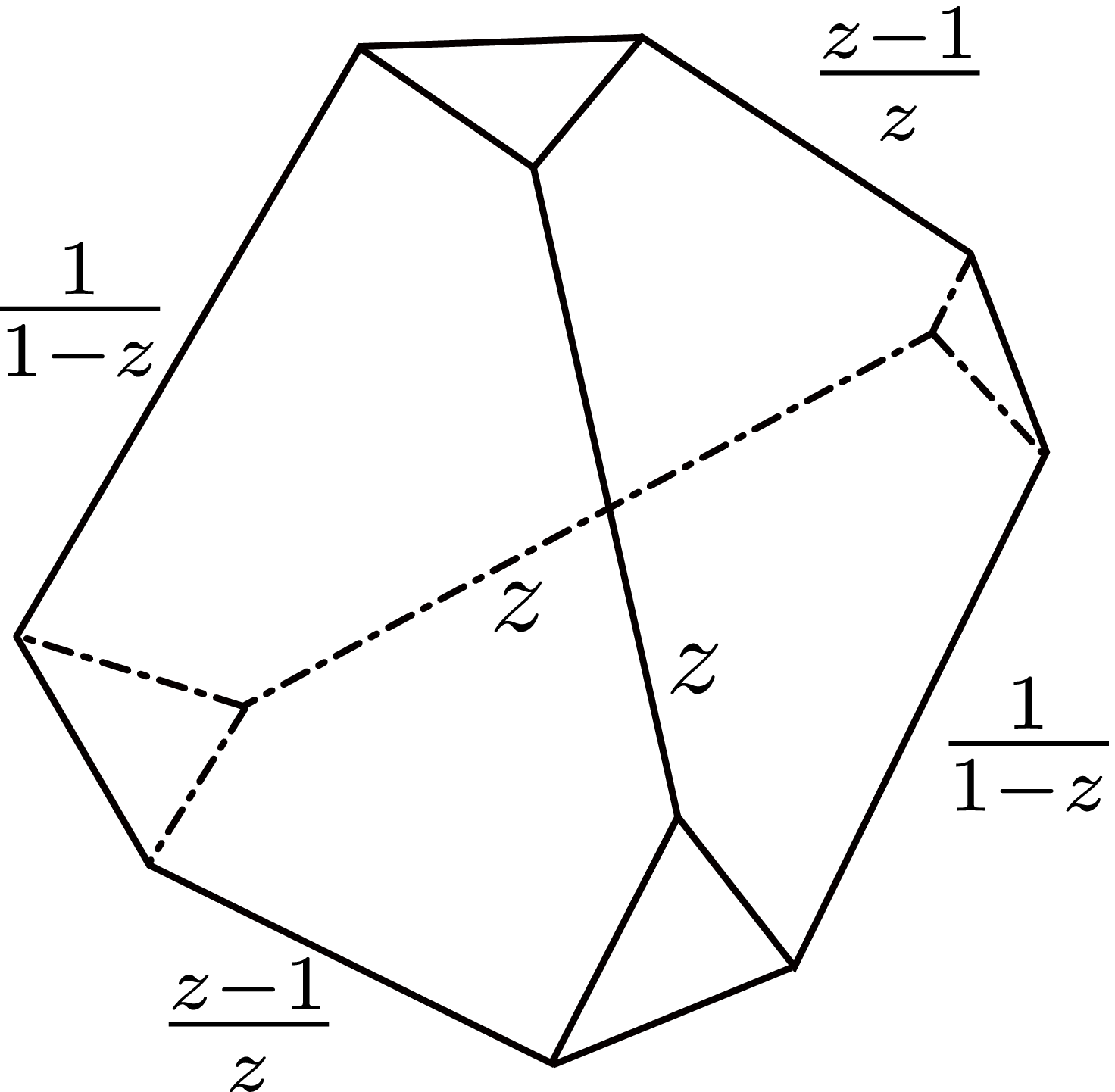_tex }}
                \caption{ \label{subfig:TruncatedTetra} A truncated tetrahedron with labeled edges}
        \end{subfigure}%

\end{figure}


\subsection{Unfilled cusp equations} Although this sets up a system of $n$ equations with $n$ unknowns, this system does not specify a unique hyperbolic structure (see \cite[Theorem 5.6]{ThurstonLectureNotes} for example). Instead, we must add 
more equations corresponding to the $k$ cusps of $M$. The construction is as follows. For the $t$th cusp of $M$ there is a peripheral torus $C_t$. 
When  viewed from a cusp, any fundamental domain for $C_t$ can be identified with a quotient of the plane by two translations $\mu_t, \lambda_{t}$. 
Intersecting this plane, with the 2-skeleton of $\hat{\mathcal{T}}$ results the plane tessellated by triangles. In the 1-skeleton of this tessellation, $\mu_t, \lambda_{t}$ lift to two oriented 
piece-wise linear 
curves, which we denote by $\gamma_{t,m}$ and $\gamma_{t,l}$, respectively (see Figure \ref{fig:MuLambda}). 
 The following conditions ensure that $C_t$ is the quotient having a Euclidean structure X by a group of translations generated by $\mu_t$ and $\lambda_t$. 
At each vertex $v$ along the path $\gamma_{t,m}$ or $\gamma_{t,l}$, we can take the product of 
dihedral angles to the right side of the oriented curve $p_v$. It is a consequence of 
\cite[Lemma E.6.8]{BenedettiPetronio1992}, that along a path containing $\#v$ vertices this condition is equivalent
to the product of all complex parameters the $p_v$ is $(-1)^{\#v}$, and so we build the $m$th equation corresonding to the meridian $\mu_t$ as follows.
For the $j$th tetrahedron, let $(-1)^{\#v} a_{j,m}$ be the number of times the complex parameter $z_j$ is part of this product, 
and let $(-1)^{\#v}b_{j,m}$ be the number of times the complex parameter $\frac{z_j-1}{z_j}$ shows up in $p_v$. 
Finally, let $(-1)^{\#v}c_{j,m}$ be the number
of times the complex parameter $\frac{1}{1-z_j}$ corresponds to an angle along this product.
This method yields the following $2k$ pairs of equations:

\begin{align}
\prod_{j=1} ^{n} \left(z_j\right)^{a_{j,m}}\left(\frac{1}{1-z_j}\right)^{b_{j,m}}\left(\frac{z_j-1}{z_j}\right)^{c_{j,m}}=1
\end{align}
 and 
 \begin{align}
  \sum_{j=1} ^{n} {a_{j,m}} \cdot \arg\left(z_j\right)+ {b_{j,m}}\cdot \arg\left(\frac{1}{1-z_j}\right)+ {c_{j,m}}\cdot \arg\left(\frac{z_j-1}{z_j}\right) = 0.
\end{align}  

The $m+1$st equation will follow the same recipe but we will replace the 
path determined by $\mu_t$ with the path determined by $\lambda_t$. 

 We note that SnapPea uses a slightly different 
method, however using identities such as $z_j \cdot \frac{1}{1-z_j}\cdot \frac{z_j-1}{z_j}=-1$
 the equations can be seen as equivalent. Furthermore, in the definition above
  the coefficients $a_{j,m}$, ${b_{j,m}}$, and ${c_{j,m}}$ would have the same
  sign and have absolute value less than 4, but under the identity
   $z_j \cdot \frac{1}{1-z_j}\cdot \frac{z_j-1}{z_j}=-1$, the sign of the coefficients can change. 
Just as above, both conditions can be satisfied if 
\begin{align}
\sum_{j=1} ^{n} a_{j,m} \log\left(z_j\right) + b_{j,m} \log\left(\frac{1}{1-z_j}\right)+c_
{j,m} \log\left(\frac{z_j-1}{z_j}\right) = 0+0\pi i.
\end{align}

Thus, since $M$ has $n$ tetrahedra and $k$ cusps (all unfilled), we have a system of 
$n+2k$ equations. 



\begin{figure}
\resizebox{9 cm}{!}{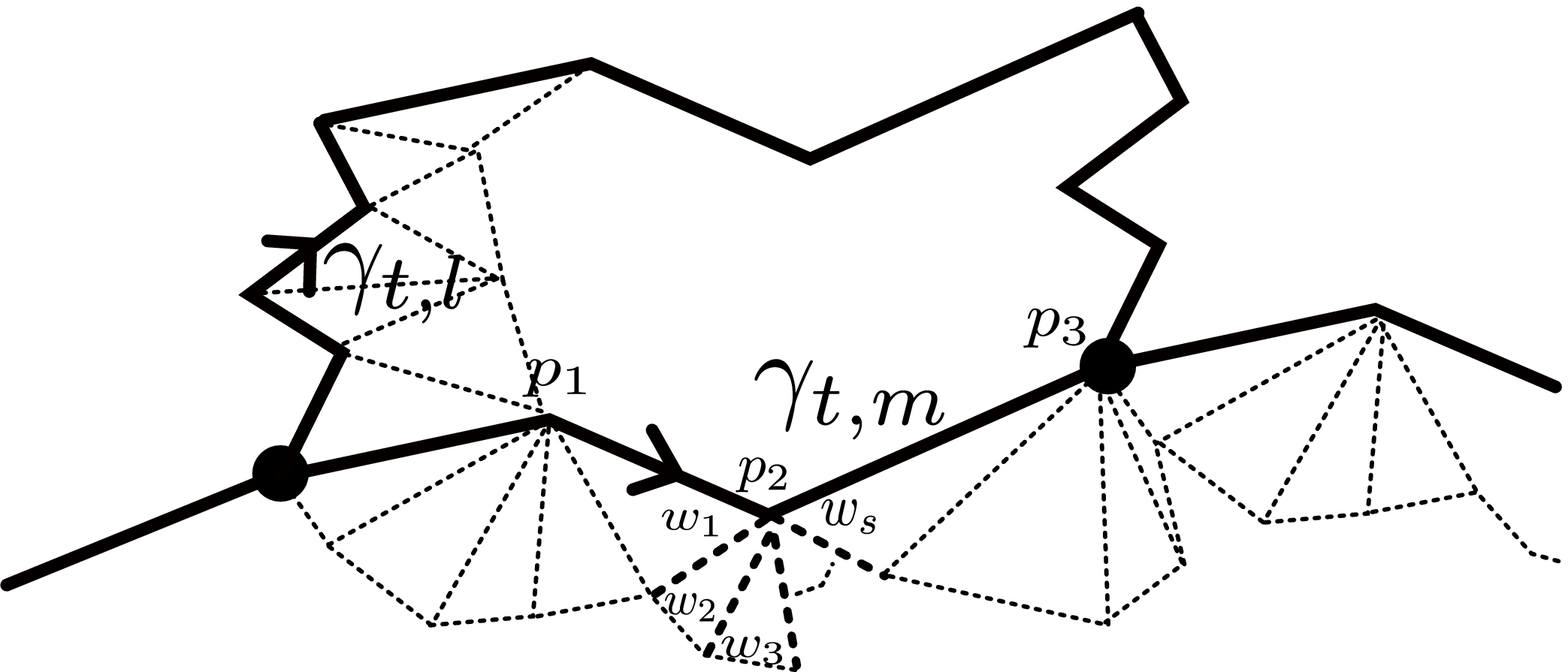}
\caption{\label{fig:MuLambda} The fundamental domain for $C_i$ with an identified boundary. In this example $\gamma_{t,m}$ is the path between the two dots and the product at point $p_2$ that contributes to the equation for $\gamma_{t,m}$ is $(-1)^3 w_1 ... w_s$, where $w_i$ denotes the complex parameter of an edge with endpoint at the cusp.}
\end{figure}

\subsection{Filled cusp equations}\label{subsect:FilledCusps} A cusped 3-manifold $M$ can be Dehn filled along a cusp $C_t \cong T^2 \times[0,\infty)$ if the set $T^2 \times(0,\infty)$ is removed and the boundary 
$T^2 \times \{0 \}$ is identified with the boundary of a solid torus. These identifications can be parametrized by the curve $\gamma$ in $\pi_1(C_t)= 
\langle\mu_{j,t}, \lambda_{j,t} | [\mu_{j,t}, \lambda_{j,t}] \rangle$ that gets identified with the curve that bounds a disk in the solid torus. If $M$ is hyperbolic, 
all but at most finitely many choices of $\gamma = p \mu_{j,t} + q \lambda_{j,t}$ will yield a hyperbolic manifold by Thurston's Hyperbolic Dehn Surgery 
Theorem \cite{ThurstonLectureNotes}. Furthermore, the process can be repeated if $M$ is filled along multiple cusps.

Denote by $N$ a manifold that comes from filling $h$ cusps of a manifold $M$. 
Using the convention that $M$ decomposes into $n$ tetrahedra and has $k$ cusps, 
we will assume that the system of $n+2k$ equations described above is already constructed. 

Denote by $\Omega_M$  the set of gluing equations for $M$. For each filled cusp in $N$, there is of course
a corresponding unfilled cusp of $M$, and so assume that $r$th and $r+1$st equations in $\Omega_M$ correspond
to the meridian and longitude (respectively) of this cusp in $M$. For the equation set of $N$, we replace these two equations
by (a simplified form of) the following:

\begin{align}
\label{Equation.Shapes} \prod_{j=1}^n & \left(\left(z_j\right)^{a_{j,r}}\left(\frac{1}{1-z_j}\right)^{b_{j,r}}\left(\frac{z_j-1}{z_j}\right)^{c_{j,r}} \right)^p \nonumber \\ & \cdot \left(\left(z_{j}\right)^{a_{j,r+1}}\left(\frac{1}{1-z_j}\right)^{b_{j,r+1}}\left(\frac{z_j-1}{z_j}\right)^{c_{j,r+1}}\right)^q =1 
\end{align}

and similarly for the argument functions:

\begin{align}
\label{Equation.Argument} & p\cdot \sum_{j=1} ^n \left( {a_{j,r}} \cdot \arg\left(z_j\right)+ {b_{j,r}}\cdot \arg\left(\frac{1}{1-z_j}\right)+ {c_{j,r}}\cdot \arg\left(\frac{z_j-1}{z_j}\right)\right)  \\
&+ q\cdot \sum_{j=1} ^{n} \left( {a_{j,r+1}} \cdot \arg\left(z_j\right)+ {b_{j,r+1}}\cdot \arg\left(\frac{1}{1-z_j}\right)+ {c_{j,r+1}}\cdot \arg\left(\frac{z_j-1}{z_j}\right)\right) =2\pi. \nonumber
\end{align} 

We can also express these equations using logarithms.

\begin{align}
p \sum_{j=1} ^{n} a_{j,r}& \log\left(z_j\right) + b_{j,r} \log\left(\frac{1}{1-z_j}\right)+c_{j,r} \log\left(\frac{z_j-1}{z_j}\right) + \notag \\
& q \sum_{j=1} ^n a_{j,r+1} \log\left(z_j\right) + b_{j,r+1} \log\left(\frac{1}{1-z_j}\right)+c_{j,r+1} \log\left(\frac{z_j-1}{z_j}\right) = 0 + 2 \pi i.
\end{align}

Again, to show that a given triangulation of $N$ corresponds to a hyperbolic structure, the solution set to these equations must be $0$ dimensional and 
have at least one solution corresponding to all 
positively oriented tetrahedra. However, in the case where $N$ has filled cusps, the solution corresponds to an incomplete hyperbolic structure. We refer the reader to \cite[$\S$E.6-iv]{BenedettiPetronio1992} for the details of how to 
extend this to a complete hyperbolic structure. 

\subsection{Full set of equations}\label{subsect:FullSet} In the paragraphs above, there are two equivalent sets of equations. The equations 
with polynomials and arguments we will call the \emph{rectangular 
equations} and the equations with logarithms we will call the \emph{$\log$ equations}. 
For a manifold $N$ with $n$ tetrahedra, $k$ unfilled cusps and $h$ filled 
cusps, then there are $n+2k+h$ $\log$ equations needed to be solved and 
$n+2k+h$ pairs of polynomial and argument equations. It is common to combine 
the features of both of these equations in a often more convenient form by 
removing the third term in each summand of the $\log$ equations (see \cite
[page 235]{BenedettiPetronio1992} for example). 

\begin{align}\label{Equation.CompleteSetLog}
\left\{\sum_{j=1} ^{n} ( a_{j,m} -c_{j,m}) \log\left(z_j\right) + (-b_{j,m}+c_{j,m}) \log\left(1-z_j\right) + c_{j,m} \cdot \pi i = \epsilon_m \right\}_{m=1} ^{n+2k+h}.
\end{align}

Here, the constant term $\epsilon_m$ is $0$ if the equation corresponds to an unfilled cusp and $2\pi i$ otherwise. 
Finally, Equation \ref{Equation.CompleteSetLog} can be expressed as polynomial and argument equations as follows:

\begin{align}\label{Equation.CompleteSetPolynomial}
& \left\{\prod_{j=1} ^{n}  (z_j)^{( a_{j,m} -c_{j,m})}\cdot (1-z_j)^{(-b_{j,m}+c_{j,m})}= \prod_{j=1} ^{n} (-1)^{c_{j,m}} \right\}_{m=1}^{n+2k+h} \mbox{ and } \\
&\left\{ \sum_{j=1} ^{n} \arg((z_j)^{( a_{j,m} -c_{j,m})}) + \arg( (1-z_j)^{(-b_{j,m}+c_{j,m})})  = \epsilon_m-  \sum_{j=1} ^{n} c_{j,m} \cdot \pi i \right\}_{m=1}^{n+2k+h} \nonumber
\end{align}

\noindent where just as above, we choose $\epsilon_m$ to be $2\pi i$ for equations corresponding to edges or filled cusps and $0$ in the case of an unfilled cusp. This structure is unique by the following result of Thurston. It is essentially a 
direct consequence of \cite[Corollary 5.7.3]{ThurstonLectureNotes}, however, we present as follows to better fit the 
discussion of this section.


\begin{thm}[Thurston]
If the gluing equations admit a solution, each of whose imaginary part is positive (positive solution, in short),
then M admits a complete hyperbolic structure of finite volume.
Furthermore the structure is determined by the solution and 
is unique up to homeomorphism and re-triangulation of the manifold.  
\end{thm}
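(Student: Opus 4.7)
The plan is to use the positive solution $\{z_j\}_{j=1}^{n}$ to build an explicit hyperbolic structure on $M$ (or on its filling $N$), and then invoke rigidity to get uniqueness. First, realize each abstract tetrahedron $T_j$ as the geodesic ideal tetrahedron in $\mathbb{H}^3$ with vertices $\{0,1,\infty,z_j\}$. Because $\mathrm{Im}(z_j) > 0$, each such realization is non-degenerate and positively oriented, and its edges inherit the labeling by $z_j$, $\frac{1}{1-z_j}$, $\frac{z_j-1}{z_j}$ shown in Figure \ref{subfig:IdealTetra01z}. The face-pairings of $\mathcal{T}$ then glue these realizations by the unique orientation-preserving isometries of $\mathbb{H}^3$ carrying one ideal triangular face to its partner.

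Next, I would verify that this gluing descends to a genuine hyperbolic structure on the interior (together with the ideal vertices) of $M$. The only possible obstruction is along the edges of $\mathcal{T}$: around each edge $e_m$ we need both the total dihedral angle to equal $2\pi$ and the holonomy to be trivial (not a nontrivial screw motion). The product form of the edge equation (together with the fact that an element of $\mathrm{PSL}(2,\mathbb{C})$ fixing two points on $\partial\mathbb{H}^3$ is determined by its rotational part and its translational part) shows that the holonomy has trivial translation and rotational part a multiple of $2\pi$, and the corresponding argument equation pins that multiple down to exactly $1$. One therefore obtains a developing map $D:\widetilde{M^\circ}\to\mathbb{H}^3$ and a holonomy representation $\rho:\pi_1(M)\to\mathrm{PSL}(2,\mathbb{C})$.

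To upgrade this to a \emph{complete} finite-volume structure I would analyze the cusps. For each unfilled cusp $C_t$, the pair of cusp equations with right-hand side $0$ assert exactly the hypotheses of \cite[Lemma E.6.8]{BenedettiPetronio1992}: both $\rho(\mu_t)$ and $\rho(\lambda_t)$ are parabolic and share a common fixed point on $\partial\mathbb{H}^3$. Consequently $D$ sends a cusp cross-section into a horosphere based at that fixed point, producing a genuine Euclidean torus and hence completeness at $C_t$. For each filled cusp with slope $\gamma = p\mu_t + q\lambda_t$, Equation \ref{Equation.Argument} forces $\rho(\gamma)$ to be a rotation by $2\pi$ around a geodesic axis, and the Thurston completion procedure of \cite[\S E.6-iv]{BenedettiPetronio1992} fills in this axis as the core of the attached solid torus. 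Finite volume is then automatic, since each positively oriented ideal tetrahedron has volume bounded by that of the regular ideal tetrahedron and only finitely many are used.

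Finally, for uniqueness, once such a complete finite-volume structure exists it is determined up to isometry by the underlying topological manifold by Mostow--Prasad rigidity, which in particular makes it independent of the ambient triangulation or the particular numerical solution used to build it. The main obstacle I expect is the cusp analysis: one must genuinely use the \emph{argument} equations (not merely their exponentiated polynomial counterparts) to rule out parabolics with distinct fixed points, rotations by $2\pi k$ with $k\neq 1$, or developing maps that fold back on themselves near an ideal vertex. This is precisely where the hypothesis $\mathrm{Im}(z_j)>0$ is essential, since without it the tetrahedra glued around an edge could overlap and the horospherical picture at a cusp would break down.
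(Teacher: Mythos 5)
The paper itself does not prove this theorem: it is stated and attributed to Thurston, with the remark ``It is essentially a direct consequence of \cite[Corollary 5.7.3]{ThurstonLectureNotes},'' and the reader is pointed to \cite{ThurstonLectureNotes} and \cite{BenedettiPetronio1992} for the details. Your proposal therefore supplies an argument where the paper supplies a citation, and the argument you give is the standard one from those sources: realize each $T_j$ as a geodesic ideal tetrahedron with positive orientation, glue by the face-pairing isometries, use the edge equations (product and argument) to kill cone singularities along edges, use the unfilled-cusp equations to force parabolic holonomy with a common fixed point and hence completeness, use the filled-cusp argument equation to produce a $2\pi$ rotation and complete along the core geodesic, observe finite volume from boundedness of ideal tetrahedral volume, and invoke Mostow--Prasad rigidity for uniqueness. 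That is a correct and complete outline matching the Benedetti--Petronio treatment cited in the paper, so there is no divergence of approach to report; your proof is essentially the unpacking of the paper's reference.

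One point worth tightening: you say the unfilled-cusp equations ``assert exactly the hypotheses of \cite[Lemma E.6.8]{BenedettiPetronio1992}: both $\rho(\mu_t)$ and $\rho(\lambda_t)$ are parabolic and share a common fixed point.'' What the cusp equations directly give is that the similarity structure on the boundary torus has trivial dilation, i.e.\ the boundary holonomy is by Euclidean isometries; the common fixed point on $\partial\mathbb{H}^3$ comes from the fact that the cusp corresponds to a single ideal vertex class in the developed picture, not from the equations alone. The conclusion is still completeness at the cusp, but the mechanism is ``boundary holonomy is a group of Euclidean isometries, hence the developing map embeds a cusp neighborhood,'' and parabolicity is a consequence rather than the hypothesis. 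This is a minor rephrasing, not a gap. Similarly, the phrase ``unique up to homeomorphism and re-triangulation'' in the theorem statement is just the paper's way of packaging Mostow--Prasad independence of triangulation, and your appeal to Mostow--Prasad handles it.
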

 Also, to ease the notation we define $\alpha_{j,m}=a_{j,m} -c_{j,m}$ and $\beta_{j,m}= -b_{j,m} +c_{j,m}$. Furthermore, we can define 
$(n+2k+h) \times 2n$ matrix  ${\Lambda_M}$ as follows:

\begin{align}\label{matrix.CoeffientList}
{\Lambda_M} = \left[
	\begin{array}{cccccc}
		\alpha_{1,1}&\cdots&\alpha_{n,1}&\beta_{1,1}&\cdots&\beta_{n,1}\\
		\vdots&\ddots&\vdots&\vdots&\ddots&\vdots\\
		\alpha_{1,n+2k+h}&\cdots&\alpha_{n,n+2k+h}&\beta_{1,n+2k+h}&\cdots&\beta_{n,n+2k+h}
	\end{array} \right].
\end{align}

The numbering of the equations in this section most closely describes the numbering convention of SnapPea. However, in the computer argument
that follows, we start by changing the order of the equations such that for a manifold $M$ with $n$ tetrahedra, $k$ unfilled cusps, and  $h$ filled cusps,
equations $n$, such that the first $n$ equations correspond to edge equations, the next $2k$ equations correspond to unfilled cusps and the final $h$ 
equations correspond to filled cusps. Namely, we can make a new matrix
 $\widehat{\Lambda_M}$ such that the first $k+h$ rows are selected from rows of $
\Lambda_M$ corresponding to either filled cusps or meridian equations of unfilled cusps and then choose a set of $n-(k+h)$ equations such that the 
corresponding rows of $\Lambda_M$ have maximal rank, finally denote by $S$, the total set of rows of selected to form $\widehat{\Lambda_M}$. 
The following theorem of Neumann and Zagier \cite{NeumannZagier} shows that if the gluing equations 
admit a positive solution, then $\widehat{\Lambda_M}$ has rank $n$
(see also \cite[Lemma 2.4]{HMoser} and \cite[E.6.ii-iv]{BenedettiPetronio1992}). 

\begin{thm}[Neumann \& Zagier \cite{NeumannZagier}]\label{Theorem:2_1}
If $M$ and the associated triangulation
correspond to a hyperbolic structure,
$\widehat{\Lambda_M}$ has rank $n$. Furthermore,
in this case, solutions to the following equations 

\begin{align}\label{Equation.EfficientSetPolynomial}
& \left\{\prod_{j=1} ^{n}  (z_j)^{ \alpha_{j,m}}\cdot (1-z_j)^{\beta_{j,m}}=  \gamma_m =\prod_{j=1} ^{n} (-1)^{c_{j,m}} \right\}_{m\in S} \mbox{ and } \\
&\left\{ \sum_{j=1} ^{n} \arg((z_j)^{ \alpha_{j,m}}) + \arg( (1-z_j)^{\beta_{j,m}})  = \epsilon_m-  \sum_{j=1} ^{n} c_{j,m} \cdot \pi i \right\}_{m\in S} \nonumber
\end{align}

\noindent form a zero dimensional algebraic set. 
\end{thm}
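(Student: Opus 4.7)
The plan is to prove the theorem in two coordinated steps: first establish the rank claim about $\widehat{\Lambda_M}$, then translate it into the zero-dimensionality of the algebraic set cut out by the selected polynomial equations.

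For the rank statement, I would appeal directly to the analysis of Neumann and Zagier \cite{NeumannZagier}. The essential content there is that the full coefficient matrix $\Lambda_M$ carries a symplectic structure coming from the combinatorics of the triangulation, and that the $n$ edge-equation rows have rank exactly $n-k$, with null space spanned by the vectors that read off the meridian and longitude holonomy at each cusp. Consequently, picking the $h$ rows coming from the Dehn filling equations at filled cusps together with the $k$ meridian rows at unfilled cusps adds exactly $k+h$ independent directions transverse to the edge-row span (the filling row $p\mu+q\lambda$ with $\gcd(p,q)=1$ is not in the kernel-annihilating subspace spanned by edges). Extending by a maximal independent subset of edge rows as prescribed in the statement then produces a full-rank $n\times 2n$ matrix $\widehat{\Lambda_M}$.

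For the zero-dimensionality, I would pass from the linear rank statement to a non-degeneracy statement about the Jacobian. Taking logarithmic derivatives of the left-hand side of \eqref{Equation.EfficientSetPolynomial} with respect to $z_j$ at a positive solution $\mathbf{z}^\circ$, the $(m,j)$ entry becomes
\[
\frac{\alpha_{j,m}}{z_j^\circ} \;-\; \frac{\beta_{j,m}}{1-z_j^\circ},
\]
so the Jacobian factors as $\widehat{\Lambda_M}$ applied on the right by the block diagonal operator with blocks $\mathrm{diag}(1/z_j^\circ)$ and $\mathrm{diag}(-1/(1-z_j^\circ))$, composed with the linear map $\mathbb{C}^n\to \mathbb{C}^{2n}$ whose columns encode the dependence of $(\log z_j,\log(1-z_j))$ on $z_j$. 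Because the hyperbolic solution has each $z_j^\circ$ in the upper half-plane (so neither $z_j^\circ$ nor $1-z_j^\circ$ vanishes) and because $\widehat{\Lambda_M}$ has rank $n$, this composite has rank $n$, so the Jacobian of the $n$ polynomial equations in $n$ variables is invertible at $\mathbf{z}^\circ$. By the implicit function theorem $\mathbf{z}^\circ$ is an isolated point of the algebraic set, and since the set is cut out by exactly $n$ hypersurfaces in $\mathbb{C}^n$, Krull's Hauptidealsatz forces every component to have dimension $\geq 0$; combined with the non-degeneracy at the distinguished point this pins the dimension at $0$.

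The main obstacle I expect is the bookkeeping that converts the symplectic-rank theorem of Neumann--Zagier into the specific rank-$n$ statement for the reduced matrix $\widehat{\Lambda_M}$ after the reordering and row selection described just before the theorem, particularly in the mixed case where some cusps are filled and others are not. The linear-algebra step of the Jacobian computation itself is routine once the rank claim is in hand, and references such as \cite[Lemma~2.4]{HMoser} and \cite[E.6.ii--iv]{BenedettiPetronio1992} provide the computational bridge from the rank of $\widehat{\Lambda_M}$ to the non-vanishing of the relevant determinant at a positive solution.
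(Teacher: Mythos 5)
The paper itself gives no proof of this theorem: it is stated as a citation to Neumann--Zagier, with secondary pointers to Moser and Benedetti--Petronio, so there is no ``paper's own argument'' for your sketch to diverge from. Your outline is broadly consistent with what those references contain, but it has one genuine gap in the middle and one vacuous step at the end, both worth flagging.

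The gap is in your Jacobian claim. You write the log-Jacobian of the selected polynomial equations as $\widehat{\Lambda_M}\,D$, where $D$ is the $2n\times n$ matrix built from $\mathrm{diag}(1/z_j^\circ)$ and $\mathrm{diag}(-1/(1-z_j^\circ))$, and assert that this composite has rank $n$ ``because'' $\widehat{\Lambda_M}$ has rank $n$ and the diagonal entries of $D$ are nonzero. That is not a formal implication: writing $\widehat{\Lambda_M}=[\Lambda_1\,|\,\Lambda_2]$, the product equals $\Lambda_1 D_1 + \Lambda_2 D_2$, which can be singular even when $[\Lambda_1\,|\,\Lambda_2]$ has full row rank and $D_1,D_2$ are invertible diagonals (for instance $\Lambda_1=I$, $\Lambda_2$ the standard symplectic matrix $\left(\begin{smallmatrix}0&1\\-1&0\end{smallmatrix}\right)$, and $D_2 D_1^{-1}=\mathrm{diag}(e_1,e_2)$ with $e_1 e_2=-1$). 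The nonvanishing of this determinant at a positive shape solution is precisely the analytic heart of Neumann--Zagier: it follows from their symplectic structure together with a positivity argument that uses $\operatorname{Im}(z_j^\circ)>0$ in an essential way, not merely to rule out $z_j^\circ\in\{0,1\}$. So the rank-$n$ statement about $\widehat{\Lambda_M}$ and the Jacobian non-degeneracy are two companion outputs of the same Neumann--Zagier machinery; neither one is deduced from the other by routine linear algebra, and your sketch should lean on the positivity lemma directly rather than treat this step as bookkeeping.

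Separately, the Krull's Hauptidealsatz remark is vacuous: for $n$ hypersurfaces in $\mathbb{C}^n$ it gives each component dimension $\geq 0$, which is always true and cannot be ``combined'' with isolation of the one distinguished point $\mathbf{z}^\circ$ to conclude that \emph{all} components are $0$-dimensional. What the implicit function theorem gives you, once the Jacobian is non-degenerate, is that $\mathbf{z}^\circ$ is an isolated point of the solution variety, and that local statement is all the Krawczyk-test application in this paper actually uses; if you want global $0$-dimensionality you would need a different argument, not Hauptidealsatz.
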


As it suffices to solve only the equations (both polynomial and argument) of in the theorem above, the remainder of this paper will be dedicated to verifying that a solution to the system of equations in (\ref{Equation.EfficientSetPolynomial}) can be obtained from a suitable approximation. 
Also, we have taken care to define the right hand side of (\ref{Equation.EfficientSetPolynomial}), especially with regard to the right hand side of the argument equations. We make the following remark to justify this attention. 

\begin{rem}
By \cite[Lemma E.6.3]{BenedettiPetronio1992}, there is no need to check the argument condition for any equation corresponding to an edge. However, this condition must be checked for any equation coming from a filled or unfilled cusp. For example, the complete solution to the equations for the census manifold $m007$,  is also a solution to the polynomial part of the rectangular equations for $m007(3,1)$, but the argument condition of the equation corresponding to the filled cusp is not satisfied by that solution.
\end{rem}


\section{Krawczyk's Test}\label{sec.Krawczyk}
In this section, we present an algorithm which performs a mathematically rigorous numerical existence test for solutions of gluing equations (\ref{Equation.CompleteSetPolynomial}).
In such an algorithm, one must deal with all errors in numerical computation including truncation errors and rounding errors.
In usual numerical computations, one uses the floating-point arithmetic.
The floating-point arithmetic approximates the real and complex arithmetic.
It is well-known that it may occur a rounding error in every floating-point operation.
Thus,
it is necessary to discuss how to obtain mathematically rigorous conclusions from results of numerical computations based on the floating-point arithmetic.
For that purpose, the idea of interval arithmetics is useful.
Historically, the concept of interval arithmetic has been proposed in 1950's \cite{Sunaga1,Sunaga2,Moore}.
Since then, a lot of work has been done in this area (see \cite{Markov_Okumura,Moore,Rump} for example).
In the following, we first review how to implement the interval arithmetic by using the floating-point arithmetic.
In the end of this section, a simple example treating the gluing equation is shown for demonstration.

\subsection{Brief Review of Interval Analysis}\label{sec.Interval}
Let us write an interval on the set of real numbers, $\mathbb{R}$, as
$X:=\{x\in\mathbb{R}:\underline{x}\le x\le\overline{x},~\underline{x}, \overline{x}\in\mathbb{R}\}=[\underline{x}, \overline{x}]$.
We denote the set of such intervals by $\mathbb{I}\mathbb{R}$.
The midpoint of the interval: $\mathrm{mid}(X)\in\mathbb{R}$ is defined by
\[
	\mathrm{mid}(X):=\frac{\overline{x}+\underline{x}}{2}.
\]
The radius of the interval $X$ is defined by
\[
	\mathrm{rad}(X):=\frac{\overline{x}-\underline{x}}{2}.
\]
Let us assume that a unary operation $u:\mathbb{R}\to\mathbb{R}$ and a binary operation
$\circ:\mathbb{R}\times\mathbb{R}\to\mathbb{R}$ are defined.
Most unary operations we consider here are standard functions.
Then, we can extend these operations to
an unary operation $\tilde u:\mathbb{I}\mathbb{R}\to\mathbb{I}\mathbb{R}$ and
a binary operation $\tilde \circ:\mathbb{I}\mathbb{R}\times\mathbb{I}\mathbb{R}\to\mathbb{I}\mathbb{R}$,
including basic four arithmetic operations $\circ\in\{+,-,\cdot,/\}$, by
\begin{equation}\label{unary}
	\tilde u(X):=\{u(x):x\in X\}
\end{equation}
and
\begin{equation}\label{binary}
	X\,\tilde\circ\,Y:=\{x\circ y:x\in X,y\in Y\},
\end{equation}
respectively.
Here, $X$, $Y\in\mathbb{I}\mathbb{R}$.
Although in order to calculate four basic operations $\tilde\circ\in\{+,-,\cdot,/\}$, which are called the interval arithmetics,
it seems to need infinitely many computations,
only finitely many calculations of end points of intervals are sufficient.
Namely, the interval arithmetic can be
executed by
\begin{align*}
	X+Y&=\left[\underline{x}+\underline{y},\overline{x}+\overline{y}\right]\\
	X-Y&=\left[\underline{x}-\overline{y},\overline{x}-\underline{y}\right]\\
	X\cdot Y&=\left[\min\{\underline{x}\cdot\underline{y},
	\overline{x}\cdot\overline{y},
	\underline{x}\cdot\overline{y},
	\overline{x}\cdot\underline{y}\},
	\max\{\underline{x}\cdot\underline{y},
	\overline{x}\cdot\overline{y},
	\underline{x}\cdot\overline{y},
	\overline{x}\cdot\underline{y}\}\right]\\
	X/Y&=X\cdot\left[\frac{1}{\overline{y}},\frac{1}{\underline{y}}\right],~(0\not\in Y)
\end{align*}
for $X=[\underline{x},\overline{x}]$ and $Y=[\underline{y},\overline{y}]$.

An interval vector is defined as $m$-tuple of interval entries satisfying
\[
	X:=\{x\in\mathbb{R}^m:x_i\in X_i~\mbox{for}~1\le i\le m\}
\]
for $X_i\in\mathbb{I}\mathbb{R}$,
which is the Cartesian product of one-dimensional intervals.
The set of interval vectors is denoted by $\mathbb{I}\mathbb{R}^m$.
The set of interval matrices $\mathbb{I}\mathbb{R}^{m\times m}$ is defined similarly.
The midpoint and the radius of an interval vector or an interval matrix are also defined component-wisely.
For example, letting $A\in\mathbb{I}\mathbb{R}^{m\times n}$ be an interval matrix,
the $\mathrm{mid}(A)$ is defined by
\[
	\mathrm{mid}(A)=\left[
	\begin{array}{ccc}
		\mathrm{mid}(a_{11})&\cdots&\mathrm{mid}(a_{1n})\\
		\vdots&\ddots&\vdots\\
		\mathrm{mid}(a_{m1})&\cdots&\mathrm{mid}(a_{mn})
	\end{array}
	\right]\in\mathbb{R}^{m\times n}
\]
for $a_{ij}\in\mathbb{I}\mathbb{R}$.


Now we shall discuss how to solve the system of gluing equations (\ref{Equation.CompleteSetPolynomial}) via interval arithmetic.
The system (\ref{Equation.CompleteSetPolynomial}) can be rewritten as
\[
	\prod_{j=1}^n z_j^{\alpha_{j,m}}(1-z_j)^{\beta_{j,m}}=\gamma_{m},~m=1,\cdots, n.
\]
Note that as discussed in \S \ref{subsect:FullSet},  $\alpha_{j,m},\beta_{j,m},\gamma_m\in\mathbb{Z}$.
Theorem \ref{Theorem:2_1} states that
the system of equations (\ref{Equation.CompleteSetPolynomial}) has at most $n$-independent equations.
To compute the matrix $S$ in Theorem \ref{Theorem:2_1}, we need to compute the ranks of matrices.
Note that if we compute the rank naively, it involves a lot of calculations and hence 
it is not very easy to rigorously compute the rank.
An efficient way to conjecture the rank of a matrix $A$ is to use the singular value decomposition of $A$
 because the rank is equal to the number of non-zero singular values.
In actual numerical computation,
one can only have approximated singular values. 
Thus, we set a threshold $\delta>0$, which on the order of $10^{-8}$.
Then, if all singular values are greater than $\delta$, then we can expect that a given matrix has full rank.
We can thus compute a candidate of $S$ by choosing $n-k$ rows of $\Lambda_M$ 
one by one so that at each step the resulting matrix has full rank.
In the event that our threshold was too big and we can not find any candidate,
we randomize the triangulation of $M$ with SnapPy and try to solve another system of gluing equations.

We assume here that we have successfully selected a candidate of $S$ and hence, 
$\widehat\Lambda_M\in\mathbb{R}^{n\times 2n}$ in Theorem \ref{Theorem:2_1}.



Setting $\alpha_{j,m}^+=\max\{\alpha_{j,m},0\}$, $\alpha_{j,m}^-=-\min\{\alpha_{j,m},0\}$,
$\beta_{j,m}^+=\max\{\beta_{j,m},0\}$, and $\beta_{j,m}^-=-\min\{\beta_{j,m},0\}$,
we can rewrite selected $n$ equations corresponding to $\widehat\Lambda_M$ as
\begin{equation}\label{orgequation}
	\prod_{j=1}^nz_j^{\alpha_{j,m}^+}(1-z_j)^{\beta_{j,m}^+}-\prod_{j=1}^nz_j^{\alpha_{j,m}^-}(1-z_j)^{\beta_{j,m}^-}\gamma_{k}=0,~k=1,\cdots, n.
\end{equation}
For $z=(z_1,\cdots,z_n)\in\mathbb{C}^n$, the gluing equation (\ref{orgequation}) can be rewritten as $g(z)=0$,
where $g$ is a mapping from $\mathbb{C}^{n}$ onto itself.
Setting $z_j=x_{2j-1}+ix_{2j}~(j=1,\cdots,n)$, we can rewrite (\ref{orgequation}) further as
\begin{equation}\label{considered_eq}
	f(x)=0,
\end{equation}
where $f:\mathbb{R}^{2n}\to\mathbb{R}^{2n}$ is a differentiable nonlinear real mapping.
Let us put $m=2n$.


Based on this preparation, we now discuss how to use the interval arithmetic to prove the existence and local uniqueness of a solution for (\ref{considered_eq}) in the interval vector $X\in\mathbb{I}\mathbb{R}^m$.
Let $c\in X$ be an approximation of a solution of $f(x)=0$.
We introduce a simplified Newton mapping $s:\mathbb{R}^m\to\mathbb{R}^m$ for the mapping $f$ by
\begin{equation}\label{snewton}
	s(x):=x-Rf(x),
\end{equation}
where $R\in\mathbb{R}^{m\times m}$ is a certain matrix.
Usually, $R$ is chosen to be an approximate inverse of $f'(c)$.
If $R$ is invertible, $f(x)=0$ and $s(x)=x$ becomes equivalent.
From the contraction mapping principle, if we can show $s$ to be a contraction mapping from $X$ into itself, 
say $s$ is a contraction on $X$, for short, then we can prove the existence and local uniqueness of solution for (\ref{considered_eq}) on $X$ provided that $R$  is invertible.
To check whether $s$ is a contraction on $X$, usually Newton-Kantorovich type theorem \cite{Kantorovich} is applied.
For the finite dimensional case, so-called Krawczyk test \cite{Krawczyk1,Krawczyk2,Rump} is often used, since in many cases its implementation is straightforward and easy compared with the direct application of Newton-Kantorovich type theorem.
For equation (\ref{considered_eq}), we will show it is the case.
So-called Krawczyk's mapping $K:\mathbb{I}\mathbb{R}^m\to\mathbb{I}\mathbb{R}^m$, 
a map proposed in \cite{Krawczyk1}, with respect to the gluing equation (\ref{considered_eq}) is defined by
\begin{eqnarray}\label{Equation.K}
	K(X)=c-Rf(c)+(I-Rf'(X))(X-c),
\end{eqnarray}
where $I\in\mathbb{R}^{m\times m}$ is a unit matrix.
We note that $K(X)$ is a mean value form of $s$.
If we consider $s(X)$ based on the naive interval extension,
which is just obtained by replacing the basic four arithmetic operation on reals with those of intervals,
it follows that 
$s(X)\supset X$ so that we cannot expect $s(X)\subset\mathrm{int}(X)$.
Here, $\mathrm{int}(X)=\{x=(x_1,\cdots,x_m)\in X:\underline{x}_i<x_i<\overline{x}_i~(i=1,\cdots,m)\}$.
On the contrary to this
\[
	K(X)\subset\mathrm{int}(X)
\]
can be expected.
Indeed, the following theorem holds, which states a sufficient condition that there exists a solution of (\ref{considered_eq})
which is unique in $X$.
On the basis of Krawczyk's mapping \cite{Krawczyk1}, Krawczyk's test is established by S.M. Rump \cite{Rump}.

\begin{thm}[Krawczyk's test]\label{Theorem.Krawczyk}
For a given interval $X\in\mathbb{I}\mathbb{R}^m$,
let $\mathrm{int}(X)$ be the interior of $X$.
If the condition
\begin{eqnarray}\label{condition.Krawczyk}
	K(X)\subset \mathrm{int}(X)
\end{eqnarray}
holds, then there uniquely exists an exact solution $x^*$ of (\ref{considered_eq}) in $X$.
Furthermore, 
it is also shown that
$R$ and all matrices $C\in f'(X)$ including $f'(x^*)$ are nonsingular.
\end{thm}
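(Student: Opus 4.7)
The plan is to prove existence by a fixed-point argument, derive nonsingularity of $R$ from the strict inclusion, and then conclude uniqueness together with nonsingularity of matrices in $f'(X)$.

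\textbf{Step 1: Mean value form for $s$.} For any $x\in X$, the fundamental theorem of calculus gives
\begin{equation*}
f(x)=f(c)+\int_0^1 f'(c+t(x-c))\,dt\,(x-c),
\end{equation*}
so substituting into (\ref{snewton}) yields
\begin{equation*}
s(x)=c-Rf(c)+\Bigl(I-R\int_0^1 f'(c+t(x-c))\,dt\Bigr)(x-c).
\end{equation*}
By convexity of $X$, the segment $c+t(x-c)$ lies in $X$ for all $t\in[0,1]$, so the integrated Jacobian belongs to the interval matrix $f'(X)$. The inclusion principle of interval arithmetic, applied to (\ref{Equation.K}), then yields $s(x)\in K(X)$ for every $x\in X$.

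\textbf{Step 2: Brouwer fixed point.} The hypothesis $K(X)\subset\mathrm{int}(X)$ gives $s(X)\subset\mathrm{int}(X)\subset X$. Since $X$ is a nonempty compact convex subset of $\mathbb{R}^m$ and $s$ is continuous, Brouwer's fixed point theorem produces $x^*\in X$ with $s(x^*)=x^*$, i.e.\ $Rf(x^*)=0$.

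\textbf{Step 3: Nonsingularity.} I would unpack $K(X)\subset\mathrm{int}(X)$ into a strict componentwise radius estimate comparing $\mathrm{rad}\bigl((I-Rf'(X))(X-c)\bigr)$ with $\mathrm{rad}(X-c)$. This yields that for every $C\in f'(X)$ the nonnegative matrix $|I-RC|$ has spectral radius strictly less than $1$. Consequently $RC=I-(I-RC)$ is nonsingular via a Neumann series argument, which forces both $R$ and every $C\in f'(X)$ (in particular $f'(x^*)$) to be nonsingular. The identity $Rf(x^*)=0$ then upgrades to $f(x^*)=0$, which is the existence claim.

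\textbf{Step 4: Uniqueness.} If $y^*\in X$ also satisfies $f(y^*)=0$, then $s(x^*)-s(y^*)=x^*-y^*$, and applying the mean value expansion from Step 1 to the pair $(x^*,y^*)$ in place of $(x,c)$ gives
\begin{equation*}
x^*-y^*=\Bigl(I-R\int_0^1 f'(y^*+t(x^*-y^*))\,dt\Bigr)(x^*-y^*),
\end{equation*}
so $RC(x^*-y^*)=0$ for some $C\in f'(X)$. The nonsingularity of $RC$ established in Step 3 yields $x^*=y^*$.

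The main obstacle is Step 3: carefully converting the geometric condition $K(X)\subset\mathrm{int}(X)$ into a uniform contraction estimate on $I-RC$ over all $C\in f'(X)$. This requires handling the radius of an interval matrix--vector product, and exploiting that $X-c$ contains a nondegenerate box around the origin so the strict inclusion propagates to every column direction and produces a genuine norm-type bound, rather than just a pointwise one.
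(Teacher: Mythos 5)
The paper does not actually prove this theorem; it is quoted as a known result, with Krawczyk's original papers and Rump's Acta Numerica survey cited as the source. So there is no in-paper argument to compare against. Your sketch is, in outline, the standard proof one finds in Rump's survey and its antecedents, and it is essentially correct.

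Two remarks on the details you flagged. In Step 1, the integrated Jacobian $\int_0^1 f'(c+t(x-c))\,dt$ lies in $f'(X)$ precisely because an interval matrix is a box, hence convex, and contains each $f'(c+t(x-c))$; this is the only place convexity of the enclosure is used, and it is why the mean-value form of $s$ lands inside $K(X)$. In Step 3, the conversion you worried about does go through cleanly: for a fixed $C\in f'(X)$, the set $c-Rf(c)+(I-RC)(X-c)\subset K(X)$ is an interval vector with midpoint $c-Rf(c)$ and radius $|I-RC|\,\mathrm{rad}(X)$ (note $\mathrm{mid}(X-c)=0$ and $I-RC$ is a point matrix), so strict inclusion in $\mathrm{int}(X)$ forces $|I-RC|\,\mathrm{rad}(X)<\mathrm{rad}(X)$ componentwise. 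Since $K(X)\subset\mathrm{int}(X)$ forces $\mathrm{rad}(X)>0$ in every component, Perron--Frobenius gives $\rho(|I-RC|)<1$, hence $\rho(I-RC)<1$, hence $RC$ is nonsingular by the Neumann series, and therefore both $R$ and $C$ are nonsingular. With $R$ nonsingular, $Rf(x^*)=0$ upgrades to $f(x^*)=0$, and your Step 4 uniqueness argument closes the proof. So the proposal is a faithful reconstruction of the cited result, with only the Perron--Frobenius step left implicit.
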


In the following, we discuss how to implement Krawczyk's test using the floating-point arithmetic.

\subsection{Machine Interval Arithmetic}\label{Subsection:interval}
While there exist many definitions of floating point systems, in order to simplify the discussion, we consider
 the floating-point number system obeying IEEE 754 standard \cite{IEEE754}.
Let $\mathbb{F}$ be a set of IEEE 754 double precision floating-point numbers.
In IEEE 754 standard, four rounding operations from $\mathbb{R}$ to $\mathbb{F}$ are defined.
The rounding to the nearest $\mathrm{fl}:\mathbb{R}\to\mathbb{F}$ is one of them. This is defined through
\[
	|\mathrm{fl}(x)-x|=\min\{|c-x|:c\in\mathbb{F}\}
\]
for $x\in\mathbb{R}$ with $|x|\le\max\{|a|:a\in\mathbb{F}\}$.
As well as rounding to nearest, IEEE 754 standard defines the rounding towards $-\infty$,
$\mathrm{fldown}:\mathbb{R}\to\mathbb{F}$, and the rounding towards $\infty$,
$\mathrm{flup}:\mathbb{R}\to\mathbb{F}$, by
\[
	\mathrm{fldown}(x):=\max\{c\in\mathbb{F}:c\le x\}
\]
and
\[
	\mathrm{flup}(x):=\min\{c\in\mathbb{F}:x\le c\},
\]
respectively.
Although we omit the explanation, IEEE 754 also defines rounding towards zero.
It is known that almost all CPUs including Intel processors are designed to satisfy IEEE 754 standards.

Let $\mathbb{I}\mathbb{F}\subset\mathbb{I}\mathbb{R}$ denote the set of intervals with floating-point end points: $\{[\underline{x},\overline{x}]:\underline{x},\overline{x}\in\mathbb{F}, \underline{x}\le\overline{x}\}$.
We define a rounding $\oblong:\mathbb{I}\mathbb{R}\to\mathbb{I}\mathbb{F}$ by
\[
	\oblong([\underline{x},\overline{x}]):=[\mathrm{fldown}(\underline{x}),\mathrm{flup}(\overline{x})].
\]
Using this, the basic four arithmetic operations of intervals $X,Y\in\mathbb{I}\mathbb{F}$, $\boxcircle:\mathbb{I}\mathbb{F}\times\mathbb{I}\mathbb{F}\to\mathbb{I}\mathbb{F}$, are defined by
\[
	X\boxcircle Y:=\oblong(X\circ Y),
\]
where $\circ\in\{+,-,\cdot,/\}$. This is called a machine interval arithmetic.
The interval vectors and matrices with floating-point end points
$\mathbb{I}\mathbb{F}^m$, $\mathbb{I}\mathbb{F}^{m\times m}$
are also the Cartesian product of one-dimensional intervals.

Although we introduce this notation to stress the operations the computer is actually perfroming, 
one of the advantages of interval arithmetic is that one can overwrite a
computer's calls for the basic operations of arithmetic with the
definitions above. Similarly, for the remainder of the paper, we will
suppress the notation $\boxcircle$ in favor of the less cumbersome
$\{+,-,\cdot, /\}$.


Since the gluing equation (\ref{considered_eq}) is based on the basic four arithmetic operations,
we can replace each arithmetic by its corresponding interval arithmetic.
Then, by this replacement, we can construct mappings $F:\mathbb{I}\mathbb{F}^m\to\mathbb{I}\mathbb{F}^m$ and
$F':\mathbb{I}\mathbb{F}^m\to\mathbb{I}\mathbb{F}^{m\times m}$ for any $X\in\mathbb{I}\mathbb{F}^m$ satisfying
\begin{equation}\label{IntExF}
	F(X)\supset\{f(x):\forall x\in X\},
\end{equation}
and
\begin{equation}\label{IntExFp}
	F'(X)\supset\left\{f'(x):\forall x\in X\right\},
\end{equation}
respectively, where $f'(x)$ is the derivative of $f$.
A map satisfying (\ref{IntExF}) (resp. (\ref{IntExFp}))
is called an {\it interval extension} of $f\in\mathbb{R}^m\to\mathbb{R}^m$  (resp. $f'\in\mathbb{R}^m\to\mathbb{R}^{m\times m}$).
To compute $F'(X)$, we use so-called automatic differentiation, which we will explain in \S \ref{sec.Auto_diff}.

The extended Krawczyk mapping $K_F:\mathbb{I}\mathbb{F}^m\to\mathbb{I}\mathbb{F}^m$ is defined by
\[
	K_F(X):=c-RF(c)+(I-RF'(X))(X-c).
\]
Obviously $K_F(X)\supset K(X)$ holds for any $X\in\mathbb{I}\mathbb{F}^m$.
Thus, if $K_F(X)\subset\mathrm{int}(X)$, then $K(X)\subset\mathrm{int}(X)$ is satisfied.
Therefore,
if we can find computable interval extension $F$ of $f$,
which is the case of the gluing equation (\ref{considered_eq}) as mentioned above,
we can have computable Krawczyk's test $K_F(X)\subset\mathrm{int}(X)$.
Now we consider how to choose the candidate interval $X\in\mathbb{I}\mathbb{F}^m$ on which $s$ may be contractive.
Let $c\in\mathbb{F}^m$ be an approximate solution of (\ref{considered_eq}) given by a certain numerical method,
e.g. Newton's method via SnapPea.
For a given vector $x=(x_1,\cdots,x_m)\in\mathbb{F}^m$, let us define the maximum norm of $x$ by $\|x\|_{\infty}:=\max_{1\le i\le m}|x_i|$.
In our code named hikmot, we choose 
\begin{eqnarray}\label{Equation.candidate}
	X=\left(
	\begin{array}{c}
	\left[c_1-r,c_1+r\right]\\
	\left[c_2-r,c_2+r\right]\\
	\vdots\\
	\left[c_m-r,c_m+r\right]
	\end{array}
	\right),
\end{eqnarray}
as a candidate interval, where $r:=2\|Rf(c)\|_{\infty}$

\subsection{Automatic differentiation}\label{sec.Auto_diff}

%

Now, we explain how to calculate $K_F(X)$ for an interval vector $X\in\mathbb{I}\mathbb{F}^m$.
Most of the difficulty comes from the evaluation of $F'(X)$.
Therefore, we now explain how to calculate $F'(X)$ for $X\in\mathbb{I}\mathbb{F}^m$.
For calculating $F'(X)$, one may consider using 
symbolic computation.
However, the computation costs easily become too high to get useful results.
There is a reasonable alternative, a notion called the
automatic differentiation, which enables us to calculate $F'(X)$ in a mathematically rigorous way.
Although there are several ways of implementing automatic differentiation, we here explain a method called bottom up automatic differentiation.
For its implementation, we first prepare an automatic differentiation object, which is a pair 
of a data structure and set of operations among the objects in that structure.
The data structure of an automatic differentiation object is 
\[
	(d_0,d_1,d_2,\cdots,d_m)\in\mathbb{I}\mathbb{F}^{m+1}.
\]
We now define several operators on this data structure.
Let $u:\mathbb{R}\to\mathbb{R}$ be a unary operation
and $U:\mathbb{I}\mathbb{R}\to\mathbb{I}\mathbb{R}$ be its interval extension.
We assume $u$ is a differentiable map.
We denote the derivative of $u$ by $u':\mathbb{R}\to\mathbb{R}$.
Note that the derivative $u'$ we use here will be computed by hand and expressed in terms of the standard operations.
Let $U':\mathbb{I}\mathbb{R}\to\mathbb{I}\mathbb{R}$ denote an interval extension of $u'$.
Then, we define a map $\tilde{U}(p):\mathbb{I}\mathbb{F}^{m+1}\to\mathbb{I}\mathbb{F}^{m+1}$ 
which will be associate to $u$ in the bottom up automatic differentiation as a map that
maps $p=(p_0,p_1,\cdots,p_m)\in\mathbb{I}\mathbb{F}^{m+1}$
to $\tilde{U}(p)=(r_0,r_1,\cdots,r_m)\in\mathbb{I}\mathbb{F}^{m+1}$, where
\begin{align*}
	r_0&:=U(p_0),\\
	r_i&:=U'(p_0)p_i~(i=1,\cdots,m).
\end{align*}
Next, 
let $b:\mathbb{R}\times\mathbb{R}\to\mathbb{R}$ be a differentiable binary operation: $(x,y)\mapsto b(x,y)$.
The interval extension of $b$ is denoted by $B:\mathbb{I}\mathbb{R}\times\mathbb{I}\mathbb{R}\to\mathbb{I}\mathbb{R}$.
We prepare by hands partial derivatives $\partial_xb:\mathbb{R}\times\mathbb{R}\to\mathbb{R}$ and 
$\partial_yb:\mathbb{R}\times\mathbb{R}\to\mathbb{R}$ of $b$ with respect to $x$ and $y$ respectively.
For example, Table \ref{Table:autodif_operations} 
shows $\partial_xB(p_0,q_0)$ and $\partial_yB(p_0,q_0)$ for basic four arithmetic operations.
Let $\partial_xB:\mathbb{I}\mathbb{R}\times\mathbb{I}\mathbb{R}\to\mathbb{I}\mathbb{R}$
and $\partial_yB:\mathbb{I}\mathbb{R}\times\mathbb{I}\mathbb{R}\to\mathbb{I}\mathbb{R}$
denote the interval extensions of $\partial_xb$ and $\partial_yb$ respectively.
We now define a map 
$\tilde{B}:\mathbb{I}\mathbb{F}^{m+1}\times\mathbb{I}\mathbb{F}^{m+1}\to\mathbb{I}\mathbb{F}^{m+1}$, 
which will be associated to $B$ in the automatic differentiation as follows:
for $p=(p_0,p_1,\cdots,p_m),~q=(q_0,q_1,\cdots,q_m) \in\mathbb{I}\mathbb{F}^{m+1}$, 
we define
$\tilde{B}(p,q)=(r_0,r_1,\cdots,r_m)$ by
\begin{align*}
	r_0&:=B(p_0,q_0),\\
	r_i&:=\partial_xB(p_0,q_0)p_i+\partial_yB(p_0,q_0)q_i~(i=1,\cdots,m).
\end{align*}

\begin{table}[htd]
\caption{Partial derivative of basic four arithmetic operations}
\begin{center}
\begin{tabular}{c|cc}
&$\partial_xB(p_0,q_0)$&$\partial_yB(p_0,q_0)$\\
\hline
$p+q$ & $1$ & $1$\\
$p-q$ & $1$ & $-1$\\
$p\cdot q$ & $q_0$ & $p_0$\\
$p/q$ & $1/q_0$ & $-p_0/q_0^2$\\
\end{tabular}
\end{center}
\label{Table:autodif_operations}
\end{table}%

Based on the above discussion, let us now consider a map $f=(f_1,\cdots,f_m)^T$ with
$f_i:\mathbb{R}^m\to\mathbb{R}~(i=1,\cdots,m)$ being differentiable functions of $x=(x_1,\cdots,x_m)$.
We assume that for the calculation of the value $f(x)$,
we have an algorithm which consists of differentiable unary operations and differentiable binary operations.
Now we will explain how to calculate $F'(X)$ for $X=(X_1,\cdots,X_m)\in\mathbb{I}\mathbb{F}^m$.
Let $F_i:\mathbb{I}\mathbb{R}^m\to\mathbb{I}\mathbb{R}$ be the interval extension of $f_i$.
For $j=1,\cdots,m$, we denote partial derivatives of $f_i$ by $\partial_{x_j}f_i:\mathbb{R}^m\to\mathbb{R}$
whose interval extensions are defined by $\partial_{x_j}F_i:\mathbb{I}\mathbb{R}^m\to\mathbb{I}\mathbb{R}$.
We define $\tilde{X}\in\mathbb{I}\mathbb{F}^{m\times(m+1)}$ by
\[
	\tilde{X}=
	\left(
	\begin{array}{c}
	\tilde{X}_1\\
	\tilde{X}_2\\
	\vdots\\
	\tilde{X}_m
	\end{array}
	\right)
	=\left(
	\begin{array}{ccccc}
	X_1&[1,1]&[0,0]&\cdots&[0,0]\\
	X_2&[0,0]&[1,1]&\cdots&[0,0]\\
	\vdots&\vdots&\vdots&\ddots&\vdots\\
	X_m&[0,0]&[0,0]&\cdots&[1,1]
	\end{array}
	\right)
\]
for $\tilde{X}_i\in\mathbb{I}\mathbb{F}^{m+1}$.
Starting with $\tilde{X}$, replacing each operation in the algorithm for calculating the function $f$ by operations of the bottom up automatic differentiation, we have an extension of $F$ as
\[
	\tilde{F}(\tilde{X})
	=\left(
	\begin{array}{ccccc}
	F_1(X)&\partial_{x_1}F_1(X)&\partial_{x_2}F_1(X)&\cdots&\partial_{x_m}F_1(X)\\
	F_2(X)&\partial_{x_1}F_2(X)&\partial_{x_2}F_2(X)&\cdots&\partial_{x_m}F_2(X)\\
	\vdots&\vdots&\vdots&\ddots&\vdots\\
	F_m(X)&\partial_{x_1}F_m(X)&\partial_{x_2}F_m(X)&\cdots&\partial_{x_m}F_m(X)
	\end{array}
	\right)\in\mathbb{I}\mathbb{F}^{m\times(m+1)},
\]
where $\tilde{F}:\mathbb{I}\mathbb{F}^{m\times(m+1)}\to\mathbb{I}\mathbb{F}^{m\times(m+1)}$.
A submatrix of $\tilde{F}(\tilde{X})$ consists of
the second column to the last column of matrix is nothing but $F'(X)$.

\subsection{Complex arithmetic}\label{Subsection:Complex}
We will now show how to compute $\tilde{F}(\tilde{X})$ using the complex mapping $g$ in (\ref{orgequation}).
For $i=1,\cdots,n$, a data structure of complex object is defined by
\begin{equation}\label{eqn:index_rule}
	\tilde{Z}_i=\left(\tilde{X}_{2i-1},\tilde{X}_{2i}\right)\in\mathbb{I}\mathbb{F}^{(m+1)\times 2}
\end{equation}
for a pair of automatic differentiation objects $\tilde{X}_{2i-1}$ and $\tilde{X}_{2i}$.
For given $z=(z_{\mathrm{re}},z_{\mathrm{im}})$ and $w=(w_{\mathrm{re}},w_{\mathrm{im}})$,
$z_{\mathrm{re}},z_{\mathrm{im}},w_{\mathrm{re}},w_{\mathrm{im}}\in\mathbb{I}\mathbb{F}^{m+1}$,
the basic four arithmetic operations between $z$ and $w$ are defined by
\begin{align*}
	z+w&:=\left(z_{\mathrm{re}}+w_{\mathrm{re}},z_{\mathrm{im}}+w_{\mathrm{im}}\right),\\
	z-w&:=\left(z_{\mathrm{re}}-w_{\mathrm{re}},z_{\mathrm{im}}-w_{\mathrm{im}}\right),\\
	z\cdot w&:=\left(z_{\mathrm{re}}w_{\mathrm{re}}-z_{\mathrm{im}}w_{\mathrm{im}},z_{\mathrm{re}}w_{\mathrm{im}}+z_{\mathrm{im}}w_{\mathrm{re}}\right),\\
	\frac{z}{w}&:=\left(\frac{z_{\mathrm{re}}w_{\mathrm{re}}+z_{\mathrm{im}}w_{\mathrm{im}}}{w_{\mathrm{re}}^2+w_{\mathrm{im}}^2},\frac{z_{\mathrm{im}}w_{\mathrm{re}}-z_{\mathrm{re}}w_{\mathrm{im}}}{w_{\mathrm{re}}^2+w_{\mathrm{im}}^2}\right).
\end{align*}
Here, these expressions $+,-,\cdot,/$ are defined through the operations of the automatic differentiation.

Let us denote the relation (\ref{eqn:index_rule}) by
\[
	\tilde{X}=\mathrm{id}_{\mathbb{C}\to\mathbb{R}}(\tilde{Z}).
\]
or
\[
	\tilde{Z}=\mathrm{id}_{\mathbb{R}\to\mathbb{C}}(\tilde{X}).
\]

Replacing all operations for calculating the complex function $g$ by operations of the bottom up automatic differentiation mixed with complex transformation,
we can define an extension of $g$ by
$\tilde{G}:\mathbb{I}\mathbb{F}^{n\times((m+1)\times 2)}\to\mathbb{I}\mathbb{F}^{n\times((m+1)\times 2)}$.
Using $\tilde{G}$, $\tilde{F}(\tilde{X})$ can be calculated as
\[
	\tilde{F}(\tilde{X})=\mathrm{id}_{\mathbb{C}\to\mathbb{R}}\left(\tilde{G}\left(\mathrm{id}_{\mathbb{R}\to\mathbb{C}}\left(\tilde{X}\right)\right)\right).
\]

\subsection{Example}\label{exam}
As an example, we shall consider the rectangular equation for the census manifold 4\_1(5,1).
Assume that indices 
$\alpha_{j,m}$, $\beta_{j,m}$, and $\gamma_m$
are given by
\[
\{\alpha_{j,m}\}=
\left(
\begin{array}{cc}
5&9\\
2&2
\end{array}
\right),~
\{\beta_{j,m}\}=
\left(
\begin{array}{cc}
0&-7\\
-1&-1
\end{array}
\right),~
\{\gamma_m\}=
\left(
\begin{array}{c}
-1\\
1
\end{array}
\right).
\]
Then the gluing equation (\ref{orgequation}) becomes
\begin{equation}\label{eqn:4_1}
\left\{
\begin{array}{l}
z_1^5(1-z_1)^0z_2^9(1-z_2)^{-7}=-1,\\
z_1^2(1-z_1)^{-1}z_2^2(1-z_2)^{-1}=1,
\end{array}
\right.
\iff
\left\{
\begin{array}{l}
z_1^5z_2^9+(1-z_2)^7=0,\\
z_1^2z_2^2-(1-z_1)(1-z_2)=0.
\end{array}
\right.
\end{equation}
The equation (\ref{eqn:4_1}) can be written as $g(z)=0$, where $g:\mathbb{C}^2\to\mathbb{C}^2$ is defined by
\begin{align*}
	g_1(z)&=z_1^5z_2^9+(1-z_2)^7\\
	g_2(z)&=z_1^2z_2^2-(1-z_1)(1-z_2).
\end{align*}
Assuming that an approximate solution\footnote{
The decimal notation of approximate solution may include a rounding error when printing to the screen. The exact value of the approximate solution is described as
((0x1.09478e0b57659$\times 2^{-3}$)\\+(0x1.7dfbed38abdae$\times 2^{-2}$)$i$, (0x1.28cbf134a88de$\times 2^{2}$)+(0x1.afeb2e24accfd$\times 2^{0}$)$i$) in the hexadecimal.
In the rest of the paper, we note that the decimal numbers might include some rounding errors.
}
of $g(z)=0$ is given by
\[
	\tilde{z}=\left(
	\begin{array}{c}
		0.1295310113154524+0.3730313363875791i\\
		4.6374476446382840+1.6871823157824217i
	\end{array}
	\right).
\]
Our candidate interval $X\in\mathbb{I}\mathbb{F}^4$ is chosen by
\[
	X=\left(
	\begin{array}{c}
		\left[0.1295310113154227,0.1295310113154820\right]\\
		\left[0.3730313363875496,0.3730313363876089\right]\\
		\left[4.6374476446382538,4.6374476446383142\right]\\
		\left[1.6871823157823886,1.6871823157824481\right]
	\end{array}
	\right).
\]
The Krawczyk mapping of $X$ is calculated as
\[
	K_F(X)=\left(
	\begin{array}{c}
		\left[0.1295310113154520,0.1295310113154527\right]\\
		\left[0.3730313363875788,0.3730313363875796\right]\\
		\left[4.6374476446382680,4.6374476446382999\right]\\
		\left[1.6871823157824033,1.6871823157824335\right]
	\end{array}
	\right).
\]
Thus, $K_F(X)\subset\mathrm{int}(X)$ holds.
Since we obtain the sufficient condition of the computable Krawczyk's test,
the existence of a unique solution of (\ref{eqn:4_1}) in $X$ is proved.

\section{Verification package}\label{sec.verification}
In this section, we explain our package {\em hikmot} (v1.0.0) that verifies the hyperbolicity of a given manifold $M$.
Our package is available at \cite{Waseda}.
We will first explain the main algorithm, and then give instructions on how to use our package.
\subsection{The main algorithm}\label{sec.hikmot.verify}
We here explain how hikmot proves the hyperbolicity of a given triangulated manifold.
We call a triangulation with positively oriented approximated solutions by SnapPea a {\em good triangulation}.
In the parlance of SnapPea, a good triangulation corresponds to an approximation with solution type `all tetrahedra positively oriented'.
First we assume that there is a good triangulation for $M$.
A rough picture of our verification package is shown in Algorithm \ref{alg.verify}.

\begin{algorithm}
\caption{Verify hyperbolicity of $M$}
\label{alg.verify}
\begin{algorithmic}                  
\REQUIRE $M$ has a good triangulation.
\ENSURE $M$ admits hyperbolic metric of finite volume.
			\STATE Apply Krawczyk's test to the approximated SnapPea's solution for (\ref{Equation.Shapes}).
			\IF{The convergence has been verified and the imaginary parts of the solutions are all positive}
				\STATE Check arguments condition (\ref{Equation.Argument}).
				\IF{The arguments condition is also satisfied.}
					\STATE return [True, the set of intervals that contains rigorous solutions].
				\ENDIF
			\ENDIF
\STATE return [False, a collection of empty sets].
\end{algorithmic}
\end{algorithm}

We will now explain
\begin{itemize}
\item how we apply Krawczyk's test, and
\item how to check argument condition (\ref{Equation.Argument}),
\end{itemize}
in detail.
Note that we use the machine interval arithmetic for every computation.

Given a good triangulation,
we first apply Newton's method five more times to the approximated solution by SnapPea 
to get a more precise solution.
Then we apply Krawczyk's test (Theorem \ref{Theorem.Krawczyk}) to the Krawczyk mapping $K_F$ and 
the set of intervals $X$ explained in (\ref{Equation.K}) and (\ref{Equation.candidate}) respectively.
If the condition (\ref{condition.Krawczyk}), that is
$K_F(X)\subset \mathrm{int}(X)$
in Theorem \ref{Theorem.Krawczyk} holds, then we make
a list $L = [\mathrm{True},~X]$.
This means that our verification of the convergence of Newton's method have succeeded and $X$ is a set of intervals each contains the rigorous shape of a tetrahedron.
If Krawczyk's test fails, we put $L = [\mathrm{False}, E]$ where $E$ is a collection of the empty sets.

Next, we will explain how to check argument condition (\ref{Equation.Argument}).
We first assume that Krawczyk's test has succeeded, i.e. $L[0] = \mathrm{True}$.
We will use the set $X = L[1]$ of intervals.
First we note that the sum of arguments is in $\mathbb{Z}\cdot 2\pi i$.
Hence to check the argument condition (\ref{Equation.Argument}), we only need to ensure that
the sum of arguments contains only desired value.
We have prepared the function {\em atan2} whose input is an interval $I_C$ of complex numbers and
output is an interval of real numbers that contains the set $\{\arg(z)\mid z\in I_C\}$.
Essentially we have used the theory of Taylor expansion.
See appendix \ref{sec.atan2} for more detail about this atan2.
Then by using our atan2, we compute the sum of arguments 
and then check if the resulting interval contains only our desired value.
Thus we can verify that our solution satisfies condition (\ref{Equation.Argument}).

If $L[0] = \mathrm{True}$ and the argument condition (\ref{Equation.Argument}) is ensured, 
then we have a proof that $M$ is a hyperbolic 3-manifold of finite volume and
our package hikmot returns $L = [\mathrm{True}, X]$.
Otherwise, i.e. if $L[0] = \mathrm{False}$ or (\ref{Equation.Argument}) is not satisfied, then we return $L' = [\mathrm{False}, E]$.

Note that even if our verification fails for a given triangulation, it is still possible that the manifold $M$
has other good triangulations whose solutions can be verified by our package.
In practice, we 
\begin{itemize}
\item randomize triangulations of $M$ in order to get another good triangulation, and
\item try to verify the hyperbolicity by using Krawczyk's test
\end{itemize}
several times.
\begin{rem}
By using $X = L[1]$ of the set of intervals for shapes of tetrahedra, in principle, 
we can compute other invariants with rigorous error control.
\end{rem}

\subsection{Contents of our package, hikmot}
Our package hikmot contains 3 folders and 1 python files and README.txt .
First, the folder ``python\_src" contains 
\begin{itemize}
\item hikmot.py, the main file of our program. The function verify\_hyperbolicity in hikmot.py is the one that proves the hyperbolicity of a given triangulated manifold, see \S \ref{sec.hikmot.verify}.
\item interval.py, complex.py, and ftostr.py files that prepare interval arithmetic, see \S \ref{sec.Interval}. 
\item rank.py, we use this file to guess the rank of $S$ in Theorem \ref{Theorem:2_1}, see also \S \ref{sec.Interval}.
\item manifold.py, this is file has been adapted from a file originially written by Bruno Martelli \cite{MartelliPetronioRoukema}.
\end{itemize}
The folder ``cpp\_src" contains
\begin{itemize}
\item the folder ``kv", that is a set of header files(autodif.hpp, hwround.hpp, make-candidate.hpp, complex.hpp, interval-vector.hpp, interval.hpp, matrix-inversion.hpp, convert.hpp, kraw-approx.hpp, rdouble.hpp)
\item krawczyk.cc, this implements Krawczyk's test, see \S \ref{sec.Krawczyk}.
\end{itemize}
We will use setup.py to install our package, see next subsection.


\subsection{How to use the package}
In this section, we explain how to use our package on Linux or Mac machine.

We assume that SnapPy has been installed as a python module 
(see the documentation of \cite{SnapPy} for installation instructions).
We also assume boost \cite{Boost} has been installed.
Our package depends on the OS, CPU and compiler.
Therefore, users need to compile the code on the machines, which they will use.

\noindent \textbf{Installing command:}
To install hikmot as a python module, use the command as a superuser;
\begin{itemize}
\item python setup.py install
\end{itemize}
This will automatically compile our code.
We would like to thank Nathan Dunfield for writing setup.py, which eases the installation process.

\noindent\textbf{To use the code:}

\begin{enumerate}
\item Install the code.
\item Run python and import snappy and hikmot.
\item  The module hikmot.verify\_hyperbolicity($M$, print\_data) is the verifier.
	\begin{enumerate}
		\item $M$ should be a SnapPy's manifold with solution type ``all tetrahedra positively oriented".
		\item If print\_data = True, then it prints out several data.
	\end{enumerate}
\end{enumerate}

The function hikmot.verify\_hyperbolicity returns a list $L$ = [``True" or ``False", $X$ or $E$] 
where $X$ (resp. $E$) is a set intervals of tetrahedra shapes (resp. empty sets) 
as explained in section \ref{sec.verification}.


\noindent \textbf{Sample:}
On a python interpreter, we can use our codes as follows.

\begin{alltt}
\noindent\begin{itemize}
\item[] >>> import snappy
\item[] >>> import hikmot
\item[] >>> M = snappy.Manifold('4_1(5,1)')
\item[] >>> L = hikmot.verify_hyperbolicity(M)
\item[] >>> L[0]
\item[] True
\item[] >>>L[1]
\item[] {\footnotesize[([0.12953101131545199,0.12953101131545273])+([0.37303133638757879,0.37303133638757963])i, \item[] ([4.6374476446382679,4.6374476446383])+([1.6871823157824032,1.6871823157824335])i]}
\end{itemize}
\end{alltt}
\begin{alltt}
\noindent\begin{itemize}
\item[] >>> M.tetrahedra\_shapes('rect')
\item[] [(0.12953101131545247+0.3730313363875793j),
\item[]  (4.637447644638281+1.6871823157824253j)]
\end{itemize} 
\end{alltt}

\section{Applications}
\subsection{Census manifolds}
SnapPy has several censuses of manifolds (see the documentation of \cite{SnapPy} for the list of censuses).
In this section we report a computer aided verification of the hyperbolicity of manifolds in several censuses.
\begin{thm}
All the manifolds in 
OrientableCuspedCensus \cite{CaHiWe} 
are hyperbolic.
\end{thm}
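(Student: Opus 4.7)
The plan is to apply the verified computation package hikmot, as described in Section 4, to each manifold in the OrientableCuspedCensus in turn. Concretely, I would loop over every manifold $M$ in the census, load $M$ via SnapPy, obtain from SnapPea an approximate solution $\tilde{z}$ to the gluing equations with solution type ``all tetrahedra positively oriented'' (i.e.\ a good triangulation in the sense of Section \ref{sec.hikmot.verify}), and then invoke \texttt{hikmot.verify\_hyperbolicity}$(M)$. By the main algorithm (Algorithm \ref{alg.verify}) together with Theorem \ref{Theorem.Krawczyk}, a return value of $[\mathrm{True},X]$ constitutes a mathematically rigorous certificate that the interval vector $X$ contains a genuine solution of the reduced gluing system (\ref{Equation.EfficientSetPolynomial}) whose tetrahedral shapes all have positive imaginary part. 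Combined with the verification of the argument condition (\ref{Equation.Argument}) done by the \texttt{atan2} routine (Appendix \ref{sec.atan2}), Thurston's theorem then yields that $M$ admits a complete hyperbolic structure of finite volume.

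The first step is to enumerate the census. SnapPy exposes the OrientableCuspedCensus as an iterable, so the outer loop is straightforward; one simply records for each manifold whether the verifier succeeds. The second step is the handling of recalcitrant triangulations: for some manifolds the initial triangulation produced by SnapPea may fail Krawczyk's test, either because the candidate interval $X$ in (\ref{Equation.candidate}) is too wide for $K_F(X)\subset \mathrm{int}(X)$ to hold, or because the matrix $\widehat{\Lambda_M}$ selected in Section \ref{subsect:FullSet} has a near-singular submatrix and the threshold $\delta$ of Section \ref{sec.Krawczyk} rejects it. In this case the package randomizes the triangulation of $M$ via SnapPy's \texttt{randomize} method, recomputes an approximate positive solution, and re-runs the verifier; one iterates this loop until success. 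I would set a reasonable cap (a few dozen attempts) on the number of retries per manifold and record any exceptions.

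The main obstacle will be computational: the OrientableCuspedCensus contains a large number of manifolds, and running Krawczyk's test plus interval-valued \texttt{atan2} for each one is nontrivial, especially as the number of tetrahedra grows. I expect a small proportion of manifolds to require multiple randomizations before a triangulation passes the test, and the verification must be run on hardware that implements IEEE 754 correctly (Section \ref{Subsection:interval}) with directed rounding available. A secondary concern is that our implementation of \texttt{atan2} must return an interval narrow enough that the integer value of $\sum_j \arg(\cdot)/(2\pi)$ can be unambiguously identified; for triangulations with many tetrahedra contributing to one cusp equation, this may require extra care but does not affect correctness.

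Finally, to publish this as a theorem we would simply report that every manifold in the census passed the verifier, together with (for reproducibility) the random seed or the list of retriangulations used. Because each individual output $[\mathrm{True},X]$ is an independently checkable certificate, the correctness of the theorem reduces to the correctness of the hikmot implementation together with the correctness of the compiler and the floating-point hardware; the mathematical content is entirely contained in Theorem \ref{Theorem.Krawczyk} and Thurston's theorem, already proved above.
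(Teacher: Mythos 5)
Your proposal is correct and matches the paper's proof: the paper simply runs a script (\texttt{VerifyCuspedCensus.py}) that applies \texttt{hikmot.verify\_hyperbolicity} to every manifold in OrientableCuspedCensus and reports that all 17661 passed, exactly the loop you describe. The only (harmless) difference is that you emphasize the randomize-and-retry fallback, which the paper does not in fact need for the cusped census (it is only invoked for the closed census in Theorem 5.2); otherwise the argument is identical, resting on Krawczyk's test, the interval argument check, and Thurston's theorem.
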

\begin{proof}
We use VerifyCuspedCensus.py, (available at \cite{Waseda}) that applies our package to every manifold in OrientableCuspedCensus.
Here we summarize the result.
\begin{itemize}
\item[] \% python VerifyCuspedCensus.py, \\
Out of 17661  manifolds in the closed census, 17661  are hyperbolic and  0  remain.\\
So these manifolds remain: [] \\


\end{itemize}
On Mac OS X 10.6.8 with Intel Core 2 Duo of speed 2.13 GHz, it takes about 8 minutes. 
\end{proof}
We also verified the hyperbolicity of manifolds in Hodgson-Weeks closed census \cite{HW}.
\begin{thm}\label{thm.Closed}
All the manifolds in 
OrientableClosedCensus
are hyperbolic.
\end{thm}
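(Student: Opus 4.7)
The plan is to apply the \emph{hikmot} verification package described in \S\ref{sec.verification} to every manifold in the Hodgson-Weeks closed census, just as was done for the cusped census in the previous theorem. The essential difference is that manifolds in the closed census are presented as Dehn fillings of cusped manifolds, so for each $M$ in the list we feed SnapPy the triangulation together with its filling coefficients $(p,q)$ on each cusp, and then invoke the procedure in Algorithm \ref{alg.verify}. The gluing system SnapPea constructs now includes filled-cusp equations as in (\ref{Equation.Shapes}) and (\ref{Equation.Argument}); once we have the full $n+2k+h = n$ equations (after restricting to a row set $S$ as in Theorem \ref{Theorem:2_1}), the mechanism of Krawczyk's test is exactly as in the cusped case.

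First, for each manifold $M$, I would ask SnapPy for an approximate solution to the gluing equations with solution type ``all tetrahedra positively oriented.'' Then I would run hikmot.verify\_hyperbolicity($M$): this performs a few additional Newton iterations, constructs the candidate interval $X$ via (\ref{Equation.candidate}), builds the Krawczyk mapping $K_F$ of (\ref{Equation.K}) using interval arithmetic and bottom-up automatic differentiation, and tests whether $K_F(X) \subset \mathrm{int}(X)$. If the test passes and all interval shapes have positive imaginary parts, I verify the argument condition for every edge and filled-cusp equation using the rigorous atan2 described in the appendix; by Theorem \ref{Theorem.Krawczyk} together with the discussion in \S\ref{subsect:FilledCusps}, this certifies that $M$ admits a hyperbolic structure of finite volume (the incomplete structure on the cusped manifold extends to a complete structure on the filling).

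Second, if the verification fails on the initial triangulation, I would randomize the triangulation of $M$ via SnapPy and retry a fixed number of times, as described at the end of \S\ref{sec.hikmot.verify}. The output of the verification script should then be a report stating that all manifolds in the census have been certified, analogous to the one produced for OrientableCuspedCensus.

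The main obstacle I anticipate is not mathematical but computational: the closed census is substantially larger than the cusped one and each closed example requires a Dehn filling with potentially large surgery coefficients, so the exponents $p, q$ appearing in (\ref{Equation.Shapes}) can produce wide interval blow-ups in $F'(X)$ and cause $K_F(X) \subset \mathrm{int}(X)$ to fail. Overcoming this will require more aggressive randomization of triangulations, tighter initial Newton refinement of the approximate solution, and possibly narrower initial candidate intervals than the default $r = 2\|Rf(c)\|_\infty$; but no genuinely new theoretical ingredient should be needed beyond the machinery of \S\ref{sec.Krawczyk}.
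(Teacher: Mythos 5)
Your outline correctly captures the general machinery (run hikmot, randomize triangulations, retry), and you are right to flag that the filled-cusp equations with large surgery coefficients make the closed case substantially harder.  However, the paper's actual proof required several techniques beyond ``more aggressive randomization and tighter Newton refinement,'' and at least one of them is not a computational tweak but a genuinely new mathematical step that your proposal is missing.

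Concretely, randomization alone handles only 10989 of the 11031 manifolds in the census.  For the remaining 42, the authors first switch to alternative surgery descriptions (taken from the SnapPea data file dehn.gz), which reduces the list to four stubborn manifolds.  For three of those they run a drill-and-refill procedure (Algorithm~\ref{alg.drillout}): drill out a short closed geodesic, take the filled triangulation, and refill with slope $(1,0)$, iterating until a good triangulation appears.  Neither of these maneuvers is anticipated in your proposal, but they are still in the spirit of ``find a better triangulation.''  The real gap is $m007(3,1)$: no triangulation of this manifold that the authors could produce admitted a verifiable positively-oriented solution.  The fix is to pass to a degree-$3$ cover $N'$ (using $H_1(m007) \cong \mathbb{Z}\oplus\mathbb{Z}/3\mathbb{Z}$ to build a $3$-cusped cover of $m007$, then fill each cusp along $(3,1)$), verify hyperbolicity of $N'$, and conclude that the finite quotient $m007(3,1)$ is hyperbolic.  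Your proposal, which confines itself to tuning the candidate interval radius $r$ and the Newton refinement and running Krawczyk's test on $M$ itself, has no mechanism to escape this obstruction; without the covering-space step the verification of the census would remain incomplete.
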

\begin{proof}
For several manifolds in this SnapPy census, it is difficult to find good triangulations.
The census contains 11031 manifolds and among them 
we can find good triangulations for 10989 manifolds either directly or by randomization of triangulations.
Then our package proves the hyperbolicity of those 10989 manifolds.
For the remaining of 42 manifolds,
we will use the list, dehn.gz, which came with the older versions of SnapPea. 
The list contains several surgery descriptions for each manifold in the census and
often yields other good triangulations.
More specifically, by looking at surgery descriptions on the list dehn.gz and randomization,
among 42 manifolds, we get good triangulations and our package proves the hyperbolicity for 38 of 42, with only
[m007(3,1), m135(1,3), v3377(-1,2), v2808(-5,1)] remaining.
Here, we are using SnapPea's notation.
For example, m007(3,1) is a closed manifold obtained by filling 1-cusped manifld m007 with slope (3,1).
For the remaining four manifolds, we apply Algorithm \ref{alg.drillout} to get good triangulations.
The authors were informed the main idea of Algorithm \ref{alg.drillout} by Craig Hodgson.
Note that in practice, we also randomize triangulations at each step of Algorithm \ref{alg.drillout}.
We can find a good triangulation by this code for [m135(1,3), v3377(-1,2), v2808(-5,1)] and 
our verification package works for those triangulations and proves the hyperbolicity.
For m007(3,1), we need to take covering of degree $3$.
Since the first homology of m007 is $\mathbb{Z}\oplus\mathbb{Z}/3\mathbb{Z}$, 
there is a covering $N$ of degree $3$ with $3$ cusps.
Then by filling each cusp of $N$ by the slope (3,1), we have a degree $3$ covering $N'$ of m007(3,1).
Then by applying Algorithm \ref{alg.drillout} to $N'$, 
we can get a good triangulation.
Our package proves the hyperbolicity of $N'$ and hence m007(3,1) is hyperbolic.
\end{proof}
\begin{rem}
The randomization function on SnapPea utilizes rand() function of c language.
The function rand() depends on compiler and machine.
For the proof of Theorem \ref{thm.Closed}, we used Mac machine with OS 10.7.5.
For the repeatability, we prepared the set ClosedManifolds.zip (available at \cite{Waseda}) of closed manifolds with good triangulations, each corresponds to a manifold in OrientableClosedCensus.
One can check the hyperbolicity of those manifolds, by using VerifyClosedCensus.py, also available at \cite{Waseda}.
\end{rem}

\begin{algorithm}                      
\caption{Find positive solutions by drilling out}
\label{alg.drillout}
\begin{algorithmic}
\REQUIRE $M$ is a closed manifold with a surgery description.
\ENSURE $M$ has a good triangulation.
	\WHILE{We can find a short closed geodesic $\gamma\subset M$}
			\STATE Drill out $\gamma$ to get $M\setminus\gamma$,
			\STATE Take filled\_triangulation $N$ of $M\setminus\gamma$,
			\STATE Fill the cusp of $N$ by the slope $(1,0)$.
			\STATE (By the above procedure, we forget original surgery description and get new surgery description.)
			\IF{N has positively oriented solution.}
				\STATE return [True, $N$]
				\ENDIF
	\ENDWHILE
\STATE return False.
\end{algorithmic}
\end{algorithm}

Finally, hikmot.verify\_hyperbolicity(M,print\_data = False, save\_data=True) generates a file for M that gives all of the internal data used to compute Krawczyk's test (Theorem \ref{Theorem.Krawczyk}), which is the key step of our verification scheme. Thus, in principle, one can just use the data from this file together with an independent scheme that rigorously does computations with this data to comfirm that Krawczyk's test yields a contraction mapping, and hence verify the hyperbolicity of the manifold $M$.

\subsection{Exceptional surgeries on alternating knots}

Two of the authors, Ichihara and Masai, applied the method developed in this paper 
to study exceptional surgeries on alternating knots in the 3-sphere $S^3$. 

A \textit{knot} is an embedded circle in $S^3$, 
and it is represented by a \textit{diagram} on the plane, 
meaning that, a projected image with under-over information at each double point. 
A knot is called \textit{alternating} if it admits a diagram 
with alternatively arranged over-crossings and under-crossings running along it. 

From a given knot in $S^3$, 
by removing its tubular neighborhood and gluing solid torus back, 
one can obtain a ``new'' closed orientable 3-manifold. 
Such an operation is called a \textit{Dehn surgery} on the given knot. 
The homeomorphism type of the 3-manifold so obtained 
is determined by the isotopy class of the loop bounding a disk in the attached solid torus, 
which is called the \textit{surgery slope}. 

Due to pioneering works by Thurston \cite{ThurstonLectureNotes}, 
all but at most finitely many Dehn surgeries on a hyperbolic knot yield hyperbolic manifolds. 
Here a knot is called \textit{hyperbolic} if its complement admits 
a complete Riemannian metric of constant sectional curvature $-1$ of finite volume. 
In view of this, such an exceptional case, that is, 
a Dehn surgery on a hyperbolic knot giving a non-hyperbolic manifold, 
is called an \textit{exceptional surgery}. 
There are many of results about the classification of exceptional surgeries on knots. 
See \cite{Boyer2002} for a survey. 

In \cite{IchiharaMasai}, 
Ichihara and Masai applied our package in this paper 
to the study of exceptional surgeries on hyperbolic alternating knots, 
and achieved a complete classification. 

Here we include a rough sketch of this work. 
In theory, a result of \cite{Lackenby2000} implies that 
there are only finitely many links (i.e., disjoint unions of knots) 
so that if one could classify exceptional surgeries on those links completely, 
then a complete classification of exceptional surgeries on all hyperbolic alternating knots is obtained. 

Unfortunately the number of such links is in the millions. 
Thus the first task is to reduce the number of such links.
We now explain an outline of this step.
The links correspond to reduced alternating diagrams of alternating knots, and are filtered in terms of the complexity of the diagrams defined by Lackenby, called the \textit{twist number}. 
In the same paper, 
Lackenby proved that the alternating knots with the reduced alternating diagrams of twist number more than 8 have no exceptional surgeries. 
Further the knots with the alternating diagrams of twist number at most 5 are shown to be arborscent knots, for which the classification of exceptional surgeries is almost known. 
See \cite{IchiharaMasai} for more details. 
Therefore our target is the knots with the alternating diagrams of twist number $t$ satisfying $6 \le t \le 8$.
At this point, however, the number of the corresponding links is more than 120000. 
To reduce the number of the links further, we applied some technique using essential laminations in the link exteriors, based on the result obtained by Wu \cite{Wu2012}.
After that, we have about remaining 30000 links.

For each of the link, we apply a computer-aided approach to get a classification of exceptional surgeries on the link, essentially  developed in \cite{MartelliPetronioRoukema}. 
(As noted before, the actual code is available in the web page of B. Martelli, one of the authors of \cite{MartelliPetronioRoukema}.)
Their method is based on the so-called 6-theorem obtained 
by Agol \cite{Agol2000} and Lackenby \cite{Lackenby2000}, 
and seems very efficient in practice. 
The key point is to compute the ``length" of the surgery slope on a horotorus in hyperbolic link compliments.
Actually, the length is more than 6 implies the corresponding surgery is not exceptional due to 6-theorem.
However, at the time of writing, the procedure obtained in \cite{MartelliPetronioRoukema} mainly depends upon the Moser's work and  utilizes floating point arithmetic.
Hence we modified their code with using interval arithmetic and our package developed in this paper. 

The final problem is about the computational time. 
For each individual link, our procedure have to be applied recursively, since the links have several components. 
For example, in the worst case, for one link, we have to apply our procedure to more than 1400 cusped manifolds. 
It then takes more than 2 hours in single standard personal computer. 
Therefore we used a super-computer, called ``TSUBAME" \cite{TSUBAME1, TSUBAME.web}, set in Tokyo Institute of Technology. 
Consequently, after all computations, which took about a day in computational time in TSUBAME, we could do verify that there are no links among those we have targeted which have exceptional surgeries.


\section{Advantages over existing methods}

The algorithm described in this paper is an improvement over 
existing methods used in 3-manifold topology in three key ways: 
the control over error, the ease of extending these computations 
to other topological invariants, and the ability to implement for
large scale verifications. 
For the purposes of this 
section, we will compare this method, based on the Krawczyk test,
to an implementation of the Kontorovich test popular in the study
 of 3-manifold topology \cite{HMoser}, and finally an 
 exact arithmetic algorithm. 

The code associated to \cite{HMoser} is available via the website
in the citation. In that specific implementation, the eigenvalues of a 
conjugate transpose matrix are computed via solving a
characteristic polynomial. The eigenvalues need to be computed to high precision
because they are used to bound the size of a neighborhood of the approximate
solution that contains an exact solution.

However, an undesirable amount of precision loss can
occur during this computaiton of eigenvalues. 
It is well known that given $n 
\times n$ matrices $A=\{a_{ij}\}$ and $A_\epsilon=\{\hat{a}_{ij}\}$ such that 
$|a_{ij} - \hat{a}_{ij}|\leq \epsilon$ for all pairs, $i,j$, the minimum of
difference the eigenvalues for
$A$ and $A_\epsilon$ can be at least $\sqrt[n]{\epsilon}$. 
To be more explicit, choose $A$ to 
the matrix with ones along 
the main and submain diagonal and let 
$A_\epsilon$ be the matrix with ones along 
the main and submain diagonal and $\epsilon$ in the $(n,1)$-entry. In this 
case, the eigenvalues of $A$ are all 1 and the eigenvalues of $A_\epsilon$
are of the form $1-\zeta_n\sqrt[n]{\epsilon}$ where $\zeta_n$ is an $n$-th root of unity. 

Despite this potential for precision loss in \cite{HMoser} as described above,
it is doubtful that a small complexity 3-manifold exploits such an error in
order to a false certificate of hyperbolicity via that computation.  
More concretely, we point out that every manifold verified in 
\cite{HMoser} has also been verified by the methods of this paper 
(including the large links of Leininger \cite{Leininger},
 which have 32 and 44 tetrahedra). 


 %

The second feature of the interval arithmetic technique is
that the computer keeps track of the accumulated error throughout the computation.
Therefore, the same techniques of computation allow for further rigorously 
 computed invariants coming from the solutions to the 
 gluing equations. In principle, any computation that Snappea preforms can be made rigorous using
 the techniques of this paper.
 
Finally, a third method exists for rigorous computer verification of hyperbolicity,
 namely snap, which uses an exact arithmetic algorithm
(see \cite{snap, snapArt}). Snap 
 is able to verify the exact hyperbolic structure
by representing algebraic numbers exactly and algebraically solve the polynomial equations 
of a manifold (\ref{Equation.EfficientSetPolynomial}). With this data, the user can compute 
arithmetic data related to hyperbolic 3-manifolds (which our methods cannot).
However, the exact arithmetic methods are designed principally for the computations of this 
arithmetic data, and so the methods of this paper are more efficient
to rigorously verify hyperbolicity. 
For instance, using the snap software would be less practical than our methods for a large scale 
verification involving large sets of manifolds with relatively high numbers of tetrahedra
such as the computation preformed in \cite{IchiharaMasai}. In addition, snap does not report the error on its ``non-arithmetic" computations,  such as volume or parabolic length. Therefore, a separate argument addressing such error would have to be derived one is able to claim rigorous computations using data from snap.

\appendix
\section{Verified computations for $\arg(z)$}\label{sec.atan2}
In this appendix, we describe how to rigorously compute $\arg(z)$ for $z=x+iy\in \mathbb{C}$ using the floating-point arithmetic.
The complex argument of $z$ is defined by
\[
	\arg(z):=\mathrm{atan2}(y,x).
\]
Here, $\mathrm{atan2}(y,x)$ is one of
commonly used mathematical functions
in programming languages including C, C++, JAVA etc.,
and defined by Table \ref{atan2_real}.
\begin{table}[htb]
\caption{Definition of $\mathrm{atan2}(y,x)$}
\begin{center}
  \begin{tabular}{c|c} \hline
	Conditions for $x$ and $y$&$\mathrm{atan2}(y,x)$\\\hline\\[-3mm]
    $y \le x, \quad y > -x$ & ${\arctan}(y/x)$ \\[2mm]
    $y > x, \quad y > -x$ & $\frac{\pi}{2} - {\arctan}(x/y)$ \\[2mm]
    $y > x, \quad y \le -x, \quad y \ge 0$ & $\pi + {\arctan}(y/x)$ \\[2mm]
    $y > x, \quad y \le -x, \quad y < 0$ & $-\pi + {\arctan}(y/x)$ \\[2mm]
    $y \le x, \quad y \le -x$ & $-\frac{\pi}{2} - {\arctan}(x/y)$ \\[2mm]\hline
  \end{tabular}
\end{center}
\label{atan2_real}
\end{table}
In this table, the $\arctan$ function is assumed to have the range $(-\frac{\pi}{2},\frac{\pi}{2})$.
Here, we first show how to rigorously evaluate an interval extension of $\arctan$ function.
Then, based on this we present a rigorous method of evaluating $\mathrm{atan2}$ function.

\subsection{Arctangent}
First, we show how to construct an interval extension of $\arctan$ function.
It is seen from Table \ref{atan2_real} that the evaluation of $\arctan(x)$ for $x\in\mathbb{F}$
reduces to that for $\arctan$ function on the interval $[-(\sqrt{2} - 1),\sqrt{2} - 1]$.
On the interval $\{x:|x| \le \sqrt{2} - 1\}$,
an interval inclusion of $\arctan(x)$ is obtained from the Maclaurin expansion with remainder term:
\begin{equation}\label{atan2_inc}
  \arctan(x) \in x - \frac{1}{3}x^3 + \frac{1}{5}x^5 - \frac{1}{7}x^7 + \cdots + \frac{1}{n}[-1,1]x^n
\end{equation}
with $n$ being chosen as
\[
    \left|\frac{1}{n}[-1,1]x^n\right|\le 2^{-53}.
\]
Calculating the right-hand side of (\ref{atan2_inc}) with interval arithmetic,
we define $\mathrm{atan\_point}:\mathbb{F}\to\mathbb{I}\mathbb{F}$ by
\[
	\mathrm{atan\_point}(x):=x - \frac{1}{3}x^3 + \frac{1}{5}x^5 - \frac{1}{7}x^7 + \cdots + \frac{1}{n}[-1,1]x^n.
\]
For $I=[a,b]\in\mathbb{I}\mathbb{F}$, we define $\underline{I}=a$ and $\overline{I}=b$ respectively.
Then, an interval extension $\mathrm{Arctan}:\mathbb{I}\mathbb{F}\to\mathbb{I}\mathbb{F}$
is given by
\[
	\mathrm{Arctan}(X):=\left[ \underline{\mathrm{atan\_point}(\underline{X})}, \overline{\mathrm{atan\_point}(\overline{X})}\right]\in \mathbb{I}\mathbb{F}.
\]


\subsection{Evaluation of $\arg(z)$}
Next, we show how to construct an interval extension of $\mathrm{atan2}$ function whose range is assumed to be $(-\pi,\pi]$.
For $x,y\in \mathbb{F}$, let us denote $I_{y/x}:=[\mathrm{fldown}(y/x),\mathrm{flup}(y/x)]$
and $I_{x/y}:=[\mathrm{fldown}(x/y),\mathrm{flup}(x/y)]$.
Furthermore, $[\frac{\pi}{2}]$ and $[\pi]$ denote the interval inclusion of $\frac{\pi}{2}$ and $\pi$ respectively.
An interval inclusion of $\mathrm{atan2}(y,x)$, which is denoted by $\mathrm{atan2\_point}(y,x)$,
can be calculated through $\mathrm{Arctan}$ function defined above.
Namely, Table \ref{atan2_point} shows a realization:
\begin{table}[htb]
\caption{Realization of $\mathrm{atan2\_point}(y,x)$}
\begin{center}
  \begin{tabular}{c|c} \hline
	Conditions for $x$ and $y$&$\mathrm{atan2\_point}(y,x)$\\\hline\\[-3mm]
    $y \le x, \quad y > -x$ & $\mathrm{Arctan}(I_{y/x})$ \\[2mm]
    $y > x, \quad y > -x$ & $[\frac{\pi}{2}] - \mathrm{Arctan}(I_{x/y})$ \\[2mm]
    $y > x, \quad y \le -x, \quad y \ge 0$ & $[\pi] + \mathrm{Arctan}(I_{y/x})$ \\[2mm]
    $y > x, \quad y \le -x, \quad y < 0$ & $-[\pi] + \mathrm{Arctan}(I_{y/x})$ \\[2mm]
    $y \le x, \quad y \le -x$ & $-[\frac{\pi}{2}] - \mathrm{Arctan}(I_{x/y})$ \\[2mm]\hline
  \end{tabular}
\end{center}
\label{atan2_point}
\end{table}

For $I_y$, $I_x\in \mathbb{I}\mathbb{F}$,
Table \ref{atan2} shows  a realization of $\mathrm{Atan2}:\mathbb{I}\mathbb{F}\times\mathbb{I}\mathbb{F}\to\mathbb{I}\mathbb{F}$,
which is a kind of interval extension of $\mathrm{atan2}$ function.
It is easily seen from Table \ref{atan2},
$\mathrm{Atan2}$ function is an interval extension of $\mathrm{atan2}$ function except the case of
$I_x \not\ni 0$, $I_y \ni 0$, and $\overline{I_x} < 0$.
In this case, a natural interval extension of $\mathrm{atan2}$ function becomes the union of the following two intervals:
\[
	\left[-\overline{[\pi]},\underline{\mathrm{atan2\_point}(\overline{I_y},\overline{I_x})}\right]~\mbox{and}~\left[\overline{\mathrm{atan2\_point}(\underline{I_y},\overline{I_x})},\overline{[\pi]}\right].
\]
Since this makes the algorithm multivalued, 
we modify the algorithm to return a single interval for such $I_y$ and $I_x$:
\[
	\mathrm{Atan2}(I_y,I_x)=\left[\underline{\mathrm{atan2\_point}(\overline{I_y},\overline{I_x})},~\overline{2[\pi] + \mathrm{atan2\_point}(\underline{I_y},\overline{I_x})}\right].
\]
It is just a change of expression so that this modification does not cause any confusion.

\begin{table}[htb]
\caption{$\mathrm{Atan2}(I_y,I_x)$}
\begin{center}
  \begin{tabular}{c|c} \hline
	Conditions for $I_x$ and $I_y$&$\mathrm{Atan2}(I_y,I_x)$ \\\hline\\[-3mm]
   $I_x \ni 0, \quad I_y \ni 0$ & $\left[-\overline{[\pi]},\overline{[\pi]}\right]$ \\[2mm]
   $I_x \ni 0, \quad I_y \not\ni 0, \quad \underline{I_y} > 0$
     & $\left[\underline{\mathrm{atan2\_point}(\underline{I_y},\overline{I_x})},
       \overline{\mathrm{atan2\_point}(\underline{I_y},\underline{I_x})}\right]$
     \\[4mm]
   $I_x \ni 0, \quad I_y \not\ni 0, \quad \overline{I_y} < 0$
     & $\left[\underline{\mathrm{atan2\_point}(\overline{I_y},\underline{I_x})},
       \overline{\mathrm{atan2\_point}(\overline{I_y},\overline{I_x})}\right]$
     \\[4mm]
   $I_x \not\ni 0, \quad I_y \ni 0, \quad \underline{I_x} > 0$
     & $\left[\underline{\mathrm{atan2\_point}(\underline{I_y},\underline{I_x})},
       \overline{\mathrm{atan2\_point}(\overline{I_y},\underline{I_x})}\right]$
     \\[4mm]
   $I_x \not\ni 0, \quad I_y \ni 0, \quad \overline{I_x} < 0$
     & $\left[\underline{\mathrm{atan2\_point}(\overline{I_y},\overline{I_x})},
       \overline{2[\pi] + \mathrm{atan2\_point}(\underline{I_y},\overline{I_x})}\right]$
     \\[4mm]
   $I_x \not\ni 0, \quad I_y \not\ni 0, \quad \underline{I_x} > 0, \quad \underline{I_y} > 0$
     & $\left[\underline{\mathrm{atan2\_point}(\underline{I_y},\overline{I_x})},
       \overline{\mathrm{atan2\_point}(\overline{I_y},\underline{I_x})}\right]$
     \\[4mm]
   $I_x \not\ni 0, \quad I_y \not\ni 0, \quad \underline{I_x} > 0, \quad \overline{I_y} < 0$
     & $\left[\underline{\mathrm{atan2\_point}(\underline{I_y},\underline{I_x})},
       \overline{\mathrm{atan2\_point}(\overline{I_y},\overline{I_x})}\right]$
     \\[4mm]
   $I_x \not\ni 0, \quad I_y \not\ni 0, \quad \overline{I_x} < 0, \quad \underline{I_y} > 0$
     & $\left[\underline{\mathrm{atan2\_point}(\overline{I_y},\overline{I_x})},
       \overline{\mathrm{atan2\_point}(\underline{I_y},\underline{I_x})}\right]$
     \\[4mm]
   $I_x \not\ni 0, \quad I_y \not\ni 0, \quad \overline{I_x} < 0, \quad \overline{I_y} < 0$
     & $\left[\underline{\mathrm{atan2\_point}(\overline{I_y},\underline{I_x})},
       \overline{\mathrm{atan2\_point}(\underline{I_y},\overline{I_x})}\right]$
     \\[2mm]\hline
  \end{tabular}
\end{center}
\label{atan2}
\end{table}

%
%
%
%

\end{document}